%% file: Unfitted_IGA_arxiv.tex
\documentclass[11pt,a4paper,reqno]{amsart}
\usepackage[margin=1.1in]{geometry}
\usepackage{amssymb,amsmath,graphicx,amsfonts}
\usepackage{mathrsfs}
\usepackage{multirow}
\usepackage{epsfig}
\usepackage{color}
\usepackage{enumerate,enumitem}
\usepackage{empheq}
\usepackage{cases}
\usepackage{cite}
\usepackage{amsgen,amscd}
\usepackage{booktabs}
\usepackage{latexsym,euscript}
\usepackage{bm,hyperref,subcaption}
\usepackage{stmaryrd}
\usepackage{placeins}
\usepackage{algorithm}
\usepackage{algpseudocode} % from algorithmicx
\allowdisplaybreaks
\newtheorem{theorem}{Theorem}[section]
\newtheorem{assumption}[theorem]{Assumption}
\newtheorem{lemma}[theorem]{Lemma}
\newtheorem{proposition}[theorem]{Proposition}
\newtheorem{remark}[theorem]{Remark}
\theoremstyle{definition}

\newtheorem{corollary}{Corollary}

\numberwithin{equation}{section}

\input{macros.tex}

\newcommand{\dist}{\operatorname{dist}}

\newcommand{\CT}{\mathcal{T}}          
\newcommand{\CI}{\mathcal{I}}

\newcommand{\HCT}{\hat{\mathcal{T}}}
\newcommand{\TCI}{\widetilde{\mathcal{I}}}
\newcommand{\Tu}{\tilde{u}}

\def\TOmega{\widetilde{\Omega}_h}

\newcommand{\RNum}[1]{\uppercase\expandafter{\romannumeral #1\relax}}

\begin{document}

\title{U\MakeLowercase{n}C\MakeLowercase{ut} FEM with PHT-splines on unfitted T-meshes} 

\thanks{This work is partially supported by the National Natural Science Foundation of China (project no. 12525111) and the CAS AMSS-PolyU Joint Laboratory of Applied Mathematics.}

\author[]{Falai Chen}
\address{Falai Chen: School of Mathematics, University of Science and Technology of China.}
\email{chenfl@ustc.edu.cn}

\author[]{Nuo Lei}
\address{Nuo Lei: Department of Applied Mathematics, The Hong Kong Polytechnic University, Hung Hom, Hong Kong. Hua Loo-Keng Center for Mathematical Sciences, Academy of Mathematics and Systems Science, Chinese Academy of Sciences, Beijing, China.}
\email{nuo97.lei@polyu.edu.hk}

\author[]{Buyang Li}
\address{Buyang Li: Department of Applied Mathematics, The Hong Kong Polytechnic University, Hong Kong.}
\email{buyang.li@polyu.edu.hk}

\keywords{Unfitted finite element method, unfitted mesh, error estimates, adaptive PHT-splines}
\subjclass[2020]{65D07, 65N15, 65N50, 65N85}

\maketitle

\begin{abstract}
We propose a novel unfitted finite element method based on polynomial splines over hierarchical T-meshes (PHT-splines) for elliptic partial differential equations (PDEs) posed on domains whose boundaries are described implicitly by a level-set function. Exploiting the enhanced smoothness of the spline space, we formulate the method by minimizing the $L^2$ residual of the PDE over an extended neighborhood of the physical domain. This formulation allows the method to be implemented entirely on the background mesh using standard element-wise quadrature. It yields a symmetric positive-definite linear system and avoids the geometric construction of curved cut subregions in boundary elements, as well as the associated nontrivial numerical integration. We establish well-posedness of the numerical scheme and derive optimal \emph{a priori} error estimates in the $H^1$ norm. In addition, we employ a residual-based \emph{a posteriori} error indicator to guide local adaptive refinement of the hierarchical T-meshes. Numerical experiments corroborate the theoretical results and demonstrate the accuracy and efficiency of the proposed approach.
\end{abstract}

\setlength\abovedisplayskip{4pt}
\setlength\belowdisplayskip{4pt}

\section{Introduction}
\label{sec:intro}
The numerical solution of partial differential equations (PDEs) defined on complex or evolving domains $\Omega \subset \mathbb{R}^d$ constitutes a cornerstone of contemporary computational science. Traditionally, finite element analysis relies on \emph{body-fitted} discretisations, where the triangulation $\mathcal{T}_h$ is constrained to conform to the physical boundary $\partial\Omega$. While robust for static and simple geometries, this paradigm poses significant computational challenges in realistic scenarios. The construction of high-quality, shape-regular meshes that accommodate high-order approximations is often a prohibitive task, frequently dominating the total computational budget of the simulation pipeline.

This bottleneck is particularly pronounced in multi-query environments, such as shape optimisation, inverse problems, and uncertainty quantification, where the governing PDE must be evaluated over an extensive ensemble of geometric configurations. In these contexts, the overhead of repeated mesh generation and the subsequent verification of mesh validity can eclipse the cost of the linear solvers themselves, impeding the development of fully automated workflows. Furthermore, for time-dependent problems involving moving interfaces or evolving boundaries, classical Arbitrary Lagrangian--Eulerian (ALE) frameworks often suffer from progressive mesh distortion. Maintaining the integrity of the numerical scheme necessitates frequent remeshing and sophisticated field interpolation (projection) between successive grids, processes that are not only computationally intensive but also susceptible to introducing numerical dissipation and compromising stability.

To circumvent these geometric constraints, \emph{unfitted} finite element methods have emerged as a powerful alternative by effectively decoupling the functional discretisation from the underlying domain geometry. By employing a fixed background mesh $\mathcal{T}_h$ and representing the boundary $\partial\Omega$ implicitly (e.g., via a level-set function), these methods eliminate the need for body-conforming mesh updates. This framework provides a versatile foundation for handling topological transitions and large deformations while retaining the convergence properties of classical finite element spaces.

Among the disparate unfitted formulations, the Immersed Finite Element Method (IFEM) \cite{Guo2019,Guo2020}, the Extended Finite Element Method (XFEM) \cite{Moes1999,Xie2021XFEM}, and the Cut Finite Element Method (CutFEM) \cite{Burman2015,Burman2025} have gained prominence. 
Furthermore, the unfitted methods have also been applied to the advection-diffusion equation \cite{Ma2022}, elliptic equations on hypersurfaces \cite{Olshanskii2016}, elliptic interface problems \cite{Wu2011,Qiu2016,Hu2026}, fluid-structure interaction problems \cite{Burman2014}, 
shape optimization \cite{Gong2026} and time-harmonic wave equations \cite{Chen2024Maxwell,LiWu2025}.
A fundamental challenge inherent in these approaches is the \emph{small cut-cell problem}: when the boundary intersects the background elements arbitrarily, the resulting system can become severely ill-conditioned as the measure of the intersection approaches zero. 

Two primary paradigms have been established to restore numerical robustness. The first is the \emph{cell-merging} strategy, exemplified by the work of Chen et al. \cite{Chen2021,Chen2023,Chen2024}, which algebraically or geometrically aggregates small boundary elements with adjacent interior macro-elements. This approach has been shown to effectively mitigate the $p$-dependence of the condition number in $hp$-FEM settings, ensuring geometric stability. The second paradigm is the \emph{variational stabilisation} framework, notably the ghost penalty method developed by Burman and co-authors \cite{Burman2010,Burman2025_review}. By augmenting the bilinear form with penalisation terms on the face facets of cut elements, this approach restores uniform coercivity and ensures that the condition number remains bounded independently of the interface's relative position to the mesh. Similar minimal stabilisation techniques have also been extended to Unfitted Isogeometric Analysis (IGA) \cite{Buffa2020,Buffa2023} and least-squares Nitsche formulations \cite{Elfverson2019} to preserve optimal convergence rates on trimmed geometries.

Although these methods significantly reduce the need for mesh generation, they also introduce new implementation challenges. In particular, they often require numerical integration over geometrically intricate subregions induced by the intersection between the domain boundary and background elements. 
This typically necessitates geometric intersection algorithms and specialised quadrature rules, which are computationally expensive and difficult to implement robustly, especially in three dimensions. Moreover, such geometry-dependent quadrature procedures may introduce additional errors that are often not explicitly accounted for in existing error analyses of unfitted finite element methods.

To avoid these difficulties in both implementation and analysis, Duprez and Lozinski proposed a novel fictitious domain approach, referred to as $\phi$-FEM \cite{Duprez2020,phifem_heat,phifem_stokes,phifem_poster}, where $\phi$ denotes the level-set function defining the domain. This formulation belongs to the class of UnCut finite element methods, in which all integrals are evaluated over the full background elements using standard quadrature rules, thereby eliminating the need for geometry-dependent integration procedures. Stability and error estimates for $\phi$-FEM have been established for the Poisson equation under the assumption that the stabilisation parameters are chosen sufficiently large, depending on constants arising from trace inequalities, Poincar\'e inequalities, and inverse estimates. In practice, however, determining appropriate values for these parameters remains challenging for general elliptic problems.

To address the issue of parameter selection, Hu, Li, and Yang \cite{HuLiYang} proposed a robust, parameter-free least-squares UnCut finite element method that guarantees stability and optimal-order convergence without requiring large stabilisation parameters. However, in order to accommodate standard $C^0$ finite element spaces, their approach is formulated through the minimisation of a discrete $H^{-1}$-residual of the governing PDE. While effective, this naturally necessitates a dual (mixed) formulation involving an auxiliary variable, thereby increasing both the algebraic dimension and the computational complexity of the resulting linear system.

Motivated by the need for a more computationally efficient framework, this paper introduces a novel UnCut formulation that leverages the high-order regularity of $C^1$-continuous PHT-splines \cite{PHT-spline,Wang2011}. By exploiting $C^1$ continuity, we can directly discretise the PDE in its strong form, effectively preserving the parameter-free advantages of the UnCut framework while entirely circumventing the need for dual variables or mixed systems. To illustrate this approach,  we consider the Poisson problem on a domain $\overline\Omega \subset (0,1)^d$ defined by a $C^{k+1}$ level-set function $\phi:[0,1]^d \to \mathbb{R}$, with $d\in\{2,3\}$, i.e., 
\begin{equation}\label{poisson}
\left\{
\begin{aligned}
-\Delta u &= f \quad &&\text{in } 
\Omega = \{\mathbf x \in (0,1)^d:  \phi(\mathbf x) < 0 \}, \\
 u &= 0 \quad &&\text{on } 
\partial \Omega = \{\mathbf x \in (0,1)^d: \phi(\mathbf x) = 0 \}.  
\end{aligned}
\right.
\end{equation}

Specifically, we consider a background hierarchical T-mesh $\mathcal{T}_h$ defined over a computational domain $\Omega_h$ such that 
\[
\Omega \subset \Omega_h \subset [0,1]^d, 
\qquad 
\operatorname{dist}(\partial\Omega_h,\partial\Omega)=O(h),
\]
where $h$ denotes the mesh size. 
The approximation space is then constructed as
\[
S_h = \{ w_h = \phi_h v_h : v_h \in V_h \},
\]
where $V_h$ is the discrete spline space defined in \eqref{eq:Vh}, and $\phi_h \in V_h$ is a $k$-th degree PHT-spline approximation of $\phi$ such that 
\[
\| \phi - \phi_h \|_{L^2(\Omega_h)} \le C h^{k+1}, 
\qquad 
\| \phi - \phi_h \|_{H^1(\Omega_h)} \le C h^k,
\]
for a constant $C > 0$ (e.g., $\phi_h$ may be chosen as the global spline interpolant or the $L^2$-projection of $\phi$). 
By construction, every function in $S_h$ vanishes on the discrete interface $\Gamma_h=\{\phi_h=0\}$. Thus, the homogeneous Dirichlet condition is imposed exactly on $\Gamma_h$ and only approximately on the physical boundary $\Gamma=\partial\Omega$.

We define the numerical solution $u_h$ to problem \eqref{poisson} as the minimizer of the following functional:
\begin{align}\label{minimization}
u_h=\arg\min_{w_h\in S_h} \Big(
\| \Delta w_h + f \|_{L^2(\Omega_h)}^2 
+j_h(w_h, w_h) \Big),
\end{align}
where $j_h(\cdot, \cdot)$ is a jump-type stabilization term defined in \eqref{penalty}. 
For simplicity, $f$ is understood as a fixed bounded extension to
$\widehat\Omega$, still denoted by $f$.
This quadratic minimization problem is equivalent to the following weak formulation: 
find $u_h\in S_h$ such that 
\begin{equation}\label{scheme}
(\Delta u_h, \Delta v_h)_{L^2(\Omega_h)} + j_h(u_h,v_h)
= -(f,\Delta v_h)_{L^2(\Omega_h)} 
\quad\forall\, v_h\in S_h .
\end{equation}
This formulation is used for practical implementation. 
However, the optimal-order $H^1$ convergence of this scheme---based on minimizing an $L^2$-residual over a $C^1$ finite element space---is not immediate and requires careful analysis.

The main contributions of this paper are threefold:
\begin{itemize}
\item \textit{Formulation:} We introduce a novel unfitted finite element method in which no numerical integration over curved boundaries or cut subregions of boundary elements is required. All computations are performed on the \emph{entire} background mesh using standard element-wise quadrature. This is achieved without introducing dual formulations or auxiliary unknowns; instead, we exploit $C^1$ trial spaces constructed via PHT-splines, leading to a method that is both simple to implement and computationally efficient.\vspace{5pt}

\item \textit{Theory:} We establish a rigorous error analysis for the proposed method. In particular, we prove existence and uniqueness of the discrete solution and derive optimal-order error estimates in the $H^1$ norm. This is nontrivial since the formulation is based on minimizing the $L^2$ residual rather than directly enforcing an $H^1$-coercive variational structure.\vspace{5pt}

\item \textit{Adaptivity:} PHT-splines are defined on hierarchical T-meshes that admit T-junctions, enabling efficient local \(h\)-refinement via a simple \emph{cross-insertion} strategy. Leveraging this flexibility, we introduce a residual-based \emph{a posteriori} error indicator for each cell \(T\in \mathcal{T}_h\),
\[
\eta_T := \|\Delta u_h + f\|_{L^2(T)},
\]
and demonstrate through numerical experiments that it provides an effective refinement criterion on hierarchical T-meshes. 
\end{itemize}

The remainder of this paper is organised as follows. Section~\ref{sec:mesh_scheme} introduces the hierarchical T-spline spaces and presents the proposed least-squares UnCut FEM scheme. Section~\ref{sec:main} establishes stability and derives \emph{a priori} error estimates. In Section~\ref{sec:poster}, we construct a residual-based \emph{a posteriori} error indicator and prove its reliability and efficiency. Section~\ref{sec:test} reports numerical experiments that confirm the optimal convergence rates and illustrate the performance of the adaptive mesh refinement strategy. Finally, concluding remarks are given in Section~\ref{sec:conclusion}.

\section{UnCut FEM for the Poisson equation}
\label{sec:mesh_scheme} 
% --- Part 1: Definition of the Hierarchical T-mesh and Spline Space ---
We consider a hierarchical T-mesh {$\hat{\mathcal{T}}_h$ } as the computational background mesh over the rectangular domain $\hat{\Omega} = [0,1]^d$. 
A T-mesh facilitates local refinement by permitting T-junctions, which are grid-line endpoints terminating at the midpoint of an adjacent element edge. 
Following the notation established in \cite{Tian2011}, we define the PHT-spline space over {$\hat{\mathcal{T}}_h$ } as
\begin{equation}\label{eq:spline_space_S}
\mathcal{S}(p,q,\alpha,\beta, {\hat{\mathcal{T}}_h } ) := \Big\{ s \in C^{\alpha,\beta}(\hat{\Omega}) \mid s|_{T} \in \mathbb{P}_{p,q} \, \forall\, T \in {\hat{\mathcal{T}}_h } \Big\},
\end{equation}
where $\mathbb{P}_{p,q}$ denotes the space of bivariate tensor-product polynomials of bi-degree $(p,q)$ on each element, and $C^{\alpha,\beta}(\hat{\Omega})$ encodes the prescribed inter-element smoothness. 
For simplicity, we focus on the symmetric case $p=q$. 
To ensure global $C^1$ continuity and maintain the nestedness of the basis functions during local refinement, it is mathematically required that $p, q \ge 3$ \cite{Wang2011}.
In this work, we employ the spline space $\mathcal{S}(k,k,\alpha,\beta, \hat{\mathcal{T}}_h )$, where $k$ denotes the polynomial degree and we set $p = q = k \ge 3$, ensuring at least $C^1$ continuity with $\alpha, \beta \geq 1$.
Some intermediate estimates below are valid already for $k\ge2$; however, they are used only under this standing assumption and do not yield a $k=2$ version of the main results.

% --- Part 2: Basis Functions and Discrete Implementation ---
{ Following the standard hierarchical construction \cite{PHT-spline, Anitescu2018}, one obtains a set of global basis functions $\{R_a\}_{a=1}^{N_a} \subset \mathcal{S}(3,3,1,1,\hat{\mathcal{T}}_h)$, where
the dimension $N_a$ is directly associated with the number of crossing vertices and T-vertices. }
For efficient numerical implementation, these basis functions are constructed element-wise via Bézier extraction. 
Specifically, for each leaf element $T \in \hat{\mathcal{T}}_h$, there exists an extraction matrix $\mathbf{C}_T$ such that the restriction of the basis functions to $T$ is expressed as a linear combination of the local tensor-product Bernstein basis $\{B_j\}_{j=1}^{(k+1)^2}$ defined on the reference square $\hat{T} = [-1, 1]^2$. 
{Based on these, we define the discrete space as
\begin{align}\label{eq:Vh}
V_h := \left\{ 
v_h : \hat{\Omega} \rightarrow \mathbb{R}: v_h(\mathbf{x}) = \sum_{a=1}^{N_a} v_a R_a(\mathbf{x}), \,\, v_a \in \mathbb{R}
\right\}.
\end{align}
}

% --- Part 3: Numerical Advantages ---
The space $V_h$ inherits several critical properties for adaptive analysis. First, for $k\ge 3$, $V_h$ is globally $C^1$-continuous, which is particularly advantageous for Galerkin formulations involving second-order operators. Second, the hierarchical structure enables truly local mesh adaptivity while preserving the linear independence of the basis. Consequently, T-junctions do not introduce independent degrees of freedom (DOFs); instead, the extraction operator $\mathbf{C}_T$ implicitly enforces the necessary $C^1$ constraints across interfaces of differing refinement levels, ensuring a seamless coupling of the global solution.

\subsection{Unfitted mesh}
Let $\phi$ be a smooth level-set function, and let $\phi_h \in V_h$ be a high-order approximation of $\phi$. We define the discrete interface as $\Gamma_h := \{ \mathbf{x} : \phi_h(\mathbf{x}) = 0 \}$.
Let $\CT_h$ be the submesh of $\HCT_h$ obtained by 
discarding all elements whose closures lie entirely outside the discrete domain $\{\mathbf{x}: \phi_h(\mathbf{x}) < 0\}$. We define the sets of interior cells and cut (strip) cells by
\[
\begin{aligned}
\mathcal{T}_h^{\rm in} &:= \left\{T \in \CT_h : T \cap \Gamma_h = \varnothing \right\}, \\
\mathcal{T}_h^{\Gamma} &:= \left\{T \in \CT_h : T \cap \Gamma_h \neq \varnothing \right\},
\end{aligned}
\]
so that $\CT_h = \mathcal{T}_h^{\rm in} \cup \mathcal{T}_h^{\Gamma}$ and $\mathcal{T}_h^{\rm in} \cap \mathcal{T}_h^{\Gamma} = \varnothing$. Correspondingly, we introduce the open domains
\[
\Omega_h^{\rm in} := \Big(\bigcup_{T \in \mathcal{T}_h^{\rm in}} T\Big)^\circ, \quad \Omega_h^{\Gamma} := \Big(\bigcup_{T \in \mathcal{T}_h^{\Gamma}} T\Big)^\circ, \quad \text{and} \quad \Omega_h := \Big(\bigcup_{T \in \CT_h} T\Big)^\circ,
\]
which satisfy $\Omega_h^{\rm in} \cap \Omega_h^{\Gamma} = \varnothing$ and $\Omega_h = \left( \overline{\Omega_h^{\rm in}} \cup \overline{\Omega_h^{\Gamma}} \right)^\circ$.
To account for the overlapping supports of the spline basis functions, we further introduce an extended domain $\widetilde{\Omega}_h$, defined as the union of the supports of all basis functions that intersect $\Omega_h$, so that $\Omega_h \subset \widetilde{\Omega}_h \subset \hat{\Omega}$.

Let $\mathcal{F}_h^\Gamma$ denote the set of interior facets of $\CT_h$ that are either intersected by the discrete interface $\Gamma_h$ or are incident to a cut element. For the hierarchical T-mesh, we denote the diameter of a cell $T \in \hat{\mathcal{T}}_h$ by $h_T$, and define the minimum and maximum mesh sizes as
\[
h_m := \min_{T \in \hat{\mathcal{T}}_h} h_T, \qquad h_M := \max_{T \in \hat{\mathcal{T}}_h} h_T,
\]
respectively. 
Throughout the analysis, we set
$ h:=h_M=\max_{T\in\widehat{\mathcal T}_h}h_T$. 
Figure~\ref{Figure_Tmesh} illustrates the hierarchical T-mesh and the domain partitioning.

\begin{figure}[htbp]
    \centering
    \includegraphics[width=0.45\textwidth]{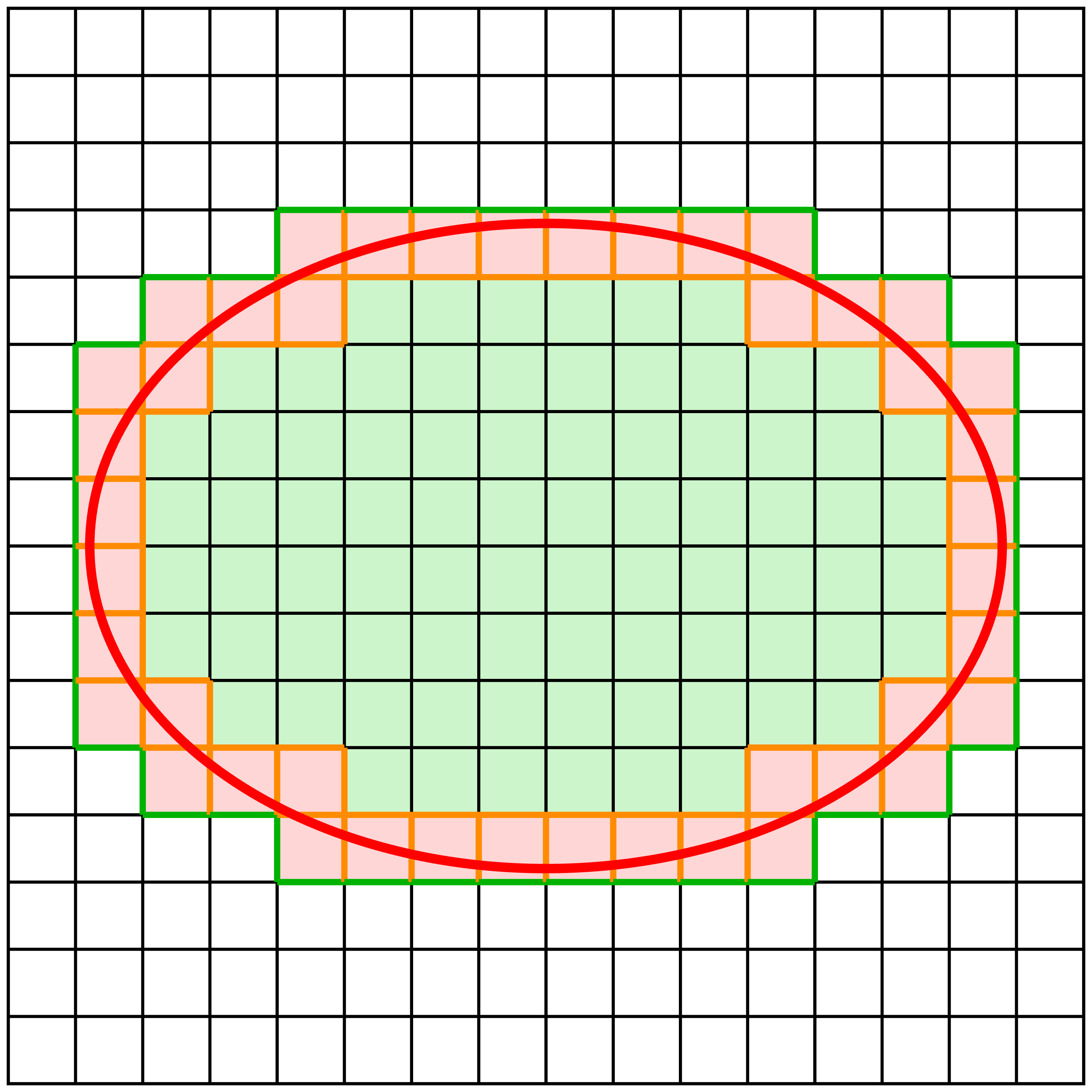}
    \hfill
    \includegraphics[width=0.45\textwidth]{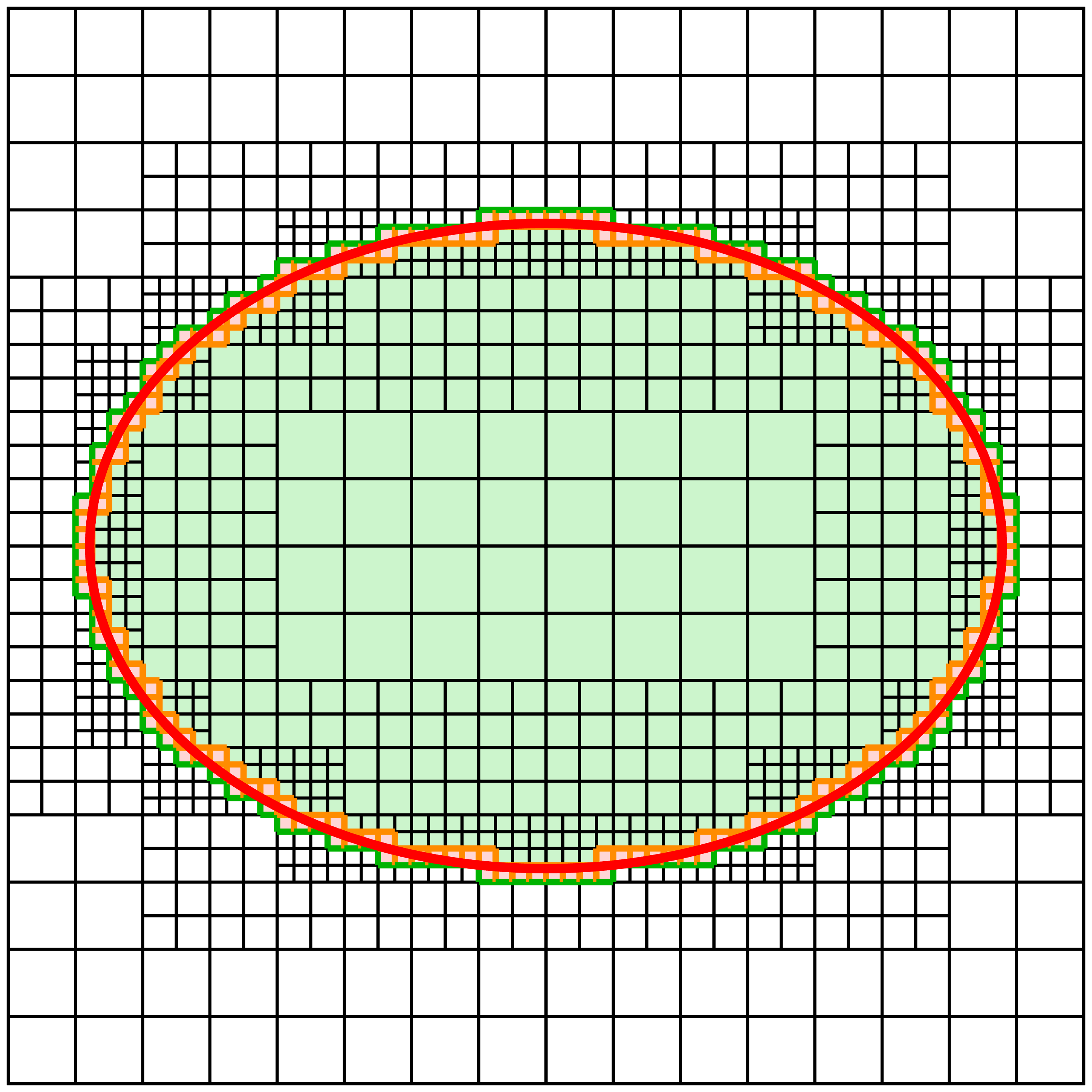}
    \caption{ Schematic of the computational domain $\hat{\Omega}$ and the associated background mesh $\HCT_h$. The green and pink cells represent the interior cells $\mathcal{T}_h^{\rm in}$ and cut cells $\mathcal{T}_h^\Gamma$, respectively. The solid red line denotes $\Gamma_h$. Furthermore, green and orange edges indicate the boundary edges in $\partial \Omega_h$ and the interface-associated facets in $\mathcal{F}_h^\Gamma$, respectively.}
    \label{Figure_Tmesh}
\end{figure}

\begin{assumption}\label{ass:phi} 
We assume that the level-set function $\phi$, its approximation $\phi_h \in V_h$, and the hierarchical T-mesh satisfy the following properties:
\begin{itemize}
\item[{\rm(P1)}]
$\phi \in C^{k+1}(\hat{\Omega})$ and the exact interface is given by $\Gamma := \{\mathbf{x} \in [0,1]^d : \phi(\mathbf{x}) = 0\}$. 
There exist constants $\delta_0,c_d,C_d>0$, independent of $h$, such that
\[
|\nabla\phi(\mathbf x)|\ge c_d,
\qquad
c_d\operatorname{dist}(\mathbf x,\Gamma)
\le |\phi(\mathbf x)|
\le C_d\operatorname{dist}(\mathbf x,\Gamma),
\]
for all $\mathbf x\in U_{\delta_0}(\Gamma)$.

\item[{\rm(P2)}] 
$\phi_h \in V_h$ satisfies the following approximation estimates:
\begin{align}
\| \phi - \phi_h \|_{L^2(\Omega_h)} &\le C h^{k+1} \| \phi \|_{H^{k+1}(\TOmega)}, \label{approx_phi_l2}\\
\| \phi - \phi_h \|_{H^1(\Omega_h)} &\le C h^k \| \phi \|_{H^{k+1}(\TOmega)}.\label{approx_phi_H1} 
\end{align}
In addition, the approximation is assumed to satisfy
\begin{align}
\|\phi-\phi_h\|_{L^\infty(\hat\Omega)}
+h\|\nabla(\phi-\phi_h)\|_{L^\infty(\hat\Omega)}
+h^2\|D_h^2(\phi-\phi_h)\|_{L^\infty(\hat\Omega)}
\le
C h^{k+1}\|\phi\|_{W^{k+1,\infty}(\hat\Omega)},
\label{approx_phi_max}
\end{align}
where $D_h^2$ denotes the elementwise second derivative.

\item[{\rm(P3)}] 
The cut-to-interior proximity condition holds:
there exists $c_b > 0$ such that for every $T \in \CT_h^\Gamma$, there exists an interior element $K \in \CT_h^{\rm in}$ satisfying $\text{dist}(T,K) \le c_b h_T$.

\item[{\rm(P4)}] 
The hierarchical mesh is 1-regular, meaning that the refinement levels of adjacent cells differ by at most one, ensuring that the mesh sizes of neighboring cells are comparable.
\end{itemize}
\end{assumption}
\begin{remark}\label{rmk:P2}\upshape
Property (P2) is a standard requirement in the context of unfitted finite element methods, which is naturally satisfied when {the $k$-th degree PHT-spline function $\phi_h \in V_h$ is defined as the global spline interpolation 
% or the $L^2$-projection 
of a sufficiently smooth level-set function $\phi$.}
Furthermore, for polynomial degrees $k \ge 3$, the $L^\infty$-norm of $\nabla\phi_h$ is uniformly bounded as a consequence of (P2) and the standard inverse inequalities for splines. 
\end{remark}

\begin{remark}\label{rmk:path}\upshape
Properties (P1)--(P2) ensure that the discrete interface $\Gamma_h$ provides a high-order approximation of $\Gamma$ while maintaining its non-degeneracy. 
Property (P3) serves as a standard geometric assumption, ensuring that every cut cell is sufficiently close to the interior to facilitate the stable extension of the solution. 
Furthermore, (P4) is critical for guaranteeing that the number of elements $N$ in a face-adjacent path from a cut cell to an interior cell remains uniformly bounded, independent of the local refinement level. 
This property is essential for establishing the uniform stability of the ghost penalty terms across non-conforming mesh interfaces.
\end{remark}

\subsection{A simple least-squares UnCut FEM using PHT-splines}
\label{sec:scheme}
In this subsection, we present the proposed numerical scheme and state the main theoretical results.
Throughout this paper, the notation $A \lesssim B$ denotes $A \le C B$ with a positive constant $C$ independent of the mesh size, while $A \approx B$ indicates that $A \lesssim B$ and $B \lesssim A$ simultaneously.
We utilize $(\cdot, \cdot)_{L^2(D)}$ and $(\cdot, \cdot)_{H^1(D)}$ to denote the standard $L^2$ and $H^1$ inner products over a domain $D$.

Based on the spline space $V_h$ and the approximate level-set function $\phi_h$, we define the finite element space as $S_h := \{ \phi_h v_h : v_h \in V_h \}$. 
Since the functions in $S_h$ are globally $C^1$-continuous, their zero-th and first-order normal jumps across any internal face are intrinsically zero. 
Thus, the jump penalty term only involves higher-order derivatives from $j=2$ to $2k$, defined as 
\begin{align}\label{penalty}
j_h(u_h, v_h) := {\sum_{j=2}^{2k} } \sum_{F \in \mathcal{F}_h^\Gamma} h_F^{2j-3} \int_{F} \jump{\partial^j_n u_h} \jump{\partial^j_n v_h}, \quad \forall\, (u_h, v_h) \in S_h \times S_h,
\end{align}
where $h_F := \min(h_{T^+}, h_{T^-})$ denotes the characteristic mesh size associated with the interior face $F = T^+ \cap T^-$.

It is important to emphasize that all computations in the proposed scheme \eqref{scheme} are carried out directly on the background mesh $\CT_h$. This approach circumvents the need for complex numerical integration over geometrically curved cut elements induced by the intersection of the physical boundary with the Cartesian grid. 
Consequently, the scheme relies solely on standard element-wise quadrature rules, significantly simplifying the implementation and the computational pipeline.

\begin{theorem}[$H^1$-error estimate]\label{Thm_H1}
Under Assumption \ref{ass:phi}, 
assuming $\Omega \subset \Omega_h$,
let $f \in H^{k-1}(\Omega_h)$ and let $\tilde{u} \in H^{k+1}(\TOmega)$ be the extension of the exact solution $u$, 
{satisfying $\Tu |_{\Omega} = u$ and $\| \Tu \|_{H^{k+1}(\TOmega)}
\leq C\|u\|_{H^{k+1}(\Omega)}$. }
Then the numerical scheme in \eqref{scheme} admits a unique solution $u_h \in S_h$, and the following error estimate holds:
\begin{equation}\label{H1_error}
\| \tilde{u} - u_h \|_{H^1(\Omega_h)} \leq C h^k \| f \|_{H^{k-1}(\Omega_h)}.
\end{equation}
\end{theorem}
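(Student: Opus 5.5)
The plan is to treat \eqref{scheme} as a Galerkin method for the symmetric form
\[
a_h(w,v):=(\Delta w,\Delta v)_{L^2(\Omega_h)}+j_h(w,v),
\]
with the energy norm $|||w|||^2:=a_h(w,w)$. The argument has three layers: stability, which gives uniqueness; a suboptimal energy estimate; and a duality argument that gains one power of $h$.

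\textbf{Step 1 (stability and well-posedness).} I would prove the discrete stability estimate
\[
\|w_h\|_{H^1(\Omega_h)}\lesssim |||w_h||| \qquad \forall\, w_h\in S_h .
\]
The main ingredient is the ghost-penalty mechanism. Using (P3), (P4) and the penalty terms $j_h$ (jumps of $\partial_n^j w_h$ for $2\le j\le 2k$, since $C^1$ continuity kills $j=0,1$), I would bound $\|w_h\|_{H^2(\Omega_h^\Gamma)}$ by $\|w_h\|_{H^2(\Omega_h^{\rm in})}+j_h(w_h,w_h)^{1/2}$. This works because cut cells are reached from interior cells by uniformly short face paths, and polynomials on neighbouring cells differ only through these jumps.

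On the other side, I need a fitted estimate on a domain where $w_h$ vanishes on the boundary. The function $w_h=\phi_h v_h$ vanishes on $\Gamma_h$, so the $H^2$-regularity of the Dirichlet Laplacian on $\Omega_{h}^-:=\{\phi_h<0\}$ applies. This domain is smooth uniformly in $h$ by (P1)--(P2) and the $W^{2,\infty}$ closeness in \eqref{approx_phi_max}. It gives
\[
\|w_h\|_{H^2(\Omega_h^-)}\lesssim\|\Delta w_h\|_{L^2(\Omega_h^-)} .
\]
Combining the two estimates yields the claim, after shrinking the part of $\Omega_h$ outside $\Omega_h^-$ (a strip of width $O(h)$) via the ghost-penalty extension. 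In particular $|||\cdot|||$ is a norm on $S_h$, so \eqref{scheme} is uniquely solvable.

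\textbf{Step 2 (consistency and energy estimate).} I interpret the fixed extension of $f$ compatibly with $\tilde u$, namely $f=-\Delta\tilde u$ on $\Omega_h\setminus\Omega$, with norm controlled by $\|u\|_{H^{k+1}}$ and hence by $\|f\|_{H^{k-1}(\Omega)}$ through elliptic regularity. Then $\Delta\tilde u+f=0$ on $\Omega_h$, and $j_h(\tilde u,\cdot)=0$ by smoothness, which gives exact Galerkin orthogonality: $a_h(\tilde u-u_h,v_h)=0$ for all $v_h\in S_h$.

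For the interpolant I need some $w_h=\phi_h v_h\in S_h$ with
\[
|||\tilde u-w_h|||+h^{-1}\|\tilde u-w_h\|_{H^1(\Omega_h)}\lesssim h^{k-1}\|\tilde u\|_{H^{k+1}}.
\]
I would construct it by writing $\tilde u=\phi\,\psi$ with $\psi=\tilde u/\phi$ smooth (valid near $\Gamma$ by (P1) and the vanishing of $\tilde u$ on $\Gamma$). Set $v_h=I_h\psi$ with $I_h$ the PHT-spline quasi-interpolant, and control $(\phi-\phi_h)I_h\psi$ by (P2). The scaling $h_F^{2j-3}$ makes the jump terms of $\tilde u-w_h$ of the same order. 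This step yields only $|||\tilde u-u_h|||\lesssim h^{k-1}$, hence only an $O(h^{k-1})$ bound in $H^1$.

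\textbf{Step 3 (duality, the main obstacle).} To recover the optimal $h^k$ rate, I would use an Aubin--Nitsche argument adapted to the biharmonic-type form. Let $e=\tilde u-u_h$. Introduce a dual problem on the smooth domain $\Omega$ (or $\Omega_h^-$): find $z$ with $\Delta^2 z=-\Delta e$ and $z=\Delta z=0$ on the boundary, chosen so that $(\nabla e,\nabla e)\approx a_h(e,z)$ up to boundary and strip terms. This problem has the regularity $\|z\|_{H^3}\lesssim\|e\|_{H^1}$. Then
\[
\|\nabla e\|^2\approx a_h(e,z-z_h)\le |||e|||\,|||z-z_h|||\lesssim h^{k-1}\cdot h\|z\|_{H^3},
\]
and dividing through gives the $h^k$ bound \eqref{H1_error}.

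I expect the difficulty to lie in two places. First, the mismatch between $\Gamma$, $\Gamma_h$ and $\partial\Omega_h$: $z$ must be extended past $\Omega$ and approximated in $S_h$, which vanishes on $\Gamma_h$ rather than $\Gamma$. The resulting perturbation must be shown to be $O(h^{k})$ using (P2) and the smallness of the strip $\Omega_h\setminus\Omega$. Second, the strip contributions to $\|\nabla e\|^2$ outside $\Omega$ must be absorbed, which I would handle with the Step 1 ghost-penalty control. I am least certain about Step 3: whether the dual problem and its boundary conditions can be set up so that all such terms are genuinely $O(h^k)$ rather than $O(h^{k-1/2})$.
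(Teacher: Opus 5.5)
Your overall architecture (ghost-penalty stability, an energy estimate with the interpolant $\phi_h\,\mathcal I_h(\tilde u/\phi)$, then duality) matches the paper's. Your Step~1, which uses $H^2$-regularity on $\{\phi_h<0\}$ where $w_h$ vanishes exactly, is a legitimate and somewhat cleaner alternative to the paper's perturbation argument on $\Omega$. That domain is only $C^{1,1}$, not smooth, but this suffices for uniform $H^2$-regularity.

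\textbf{The main gap is in Step~3.} This is where the extra power of $h$ must come from, and you leave it open. Because $u_h$ vanishes on $\Gamma_h$ rather than on $\Gamma$, the trace $e|_\Gamma=-u_h|_\Gamma$ is nonzero. Your ``$\approx$'' therefore hides a boundary contribution that is not small a priori. For your dual problem one computes exactly
\[
\|\nabla e\|_{L^2(\Omega)}^2=(\Delta e,\Delta z)_{L^2(\Omega)}+\|\nabla e_\Gamma\|_{L^2(\Omega)}^2 ,
\]
where $e_\Gamma$ is the harmonic extension of $e|_\Gamma$. For the paper's dual problem $\Delta v=e$, $v|_\Gamma=0$, the analogous term is $\int_\Gamma e\,\partial_n e$.

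Generic trace inequalities only give $\|\nabla e_\Gamma\|^2_{L^2(\Omega)}\lesssim\|e\|^2_{H^{1/2}(\Gamma)}\lesssim\|e\|_{H^1(\Omega)}\|e\|_{H^2(\Omega)}=O(h^{k-1})\|e\|_{H^1}$. That returns exactly the suboptimal rate of Step~2.

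The missing idea is a sharp trace estimate that exploits the structure of $S_h$. Write $e|_\Gamma=(\phi-\phi_h)\,u_h/\phi_h$ and bound $u_h/\phi_h$ by a Hardy-type inequality together with (P2). The paper proves in this way that $\|e\|_{L^2(\Gamma)}\lesssim h^{k+1/2}\|f\|_{H^{k-1}(\Omega_h)}+h^{k-1}\|e\|_{H^1(\Omega_h)}$. Interpolating this with $\|\nabla e\|_{L^2(\Gamma)}\lesssim h^{-1/2}\|e\|_{H^1(\Omega_h)}+h^{1/2}\|e\|_{H^2(\Omega_h)}$ bounds your boundary term by $O(h^k)\|f\|\,\|e\|_{H^1}+O(h^{2k})\|f\|^2+O(h^{k-3/2})\|e\|_{H^1}^2$, which can be absorbed.

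You cannot sidestep this by posing the dual problem on $\{\phi_h<0\}$, where $e|_{\Gamma_h}=\tilde u|_{\Gamma_h}=O(h^{k+1})$ would be trivially small. There $\Gamma_h$ is only $C^{1,1}$, so the uniform $H^3$-regularity you need is not available.

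The same trace bound is needed at the very end. Duality controls only $\|\nabla e\|_{L^2(\Omega)}$. Reaching $\|e\|_{H^1(\Omega_h)}$ requires a Poincar\'e--Friedrichs step involving $\|e\|_{L^2(\Gamma)}$ and the $h^2$-weighted extension estimate of Corollary~\ref{cor:H1_extension}, $\|\xi_h\|^2_{H^1(\Omega_h)}\lesssim\|\nabla\xi_h\|^2_{L^2(\Omega)}+\|\xi_h\|^2_{L^2(\Gamma)}+h^2j_h(\xi_h,\xi_h)$. Your Step~1 bound $\|\xi_h\|_{H^1(\Omega_h)}\lesssim\TN{\xi_h}$ only gives $O(h^{k-1})$ on the strip.

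\textbf{A second, smaller gap is in Step~2.} To get exact Galerkin orthogonality you assume the extension of $f$ equals $-\Delta\tilde u$ on $\Omega_h\setminus\Omega$. But the theorem is stated for a given extension $f\in H^{k-1}(\Omega_h)$, which is part of the data defining $u_h$, and a compatible extension is not computable in practice. As written you prove only a special case.

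In general $r:=-\Delta\tilde u-f$ is nonzero on the strip, and $a_h(\tilde u-u_h,v_h)=-(r,\Delta v_h)_{L^2(\Omega_h)}$ for $v_h\in S_h$, reading $j_h(\tilde u,\cdot)$ as $0$. The repair is easy:
\begin{itemize}
\item $r\in H^{k-1}(\Omega_h)$ vanishes in $\Omega$, so Lemma~\ref{lem:vanish} gives $\|r\|_{L^2(\Omega_h)}\lesssim h^{k-1}\|f\|_{H^{k-1}(\Omega_h)}$. This suffices for the energy estimate.
\item In the duality step the extra term $(r,\Delta z_h)_{L^2(\Omega_h\setminus\Omega)}$ is $O(h^k)\|e\|_{H^1(\Omega_h)}$ as soon as $\|\Delta z_h\|_{L^2(\Omega_h\setminus\Omega)}\lesssim h\|z\|_{H^3}$.
\end{itemize}
Here your boundary condition $\Delta z=0$ on $\Gamma$ is an asset. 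Via a thin-strip Poincar\'e inequality it gives this bound directly, and it likewise makes the strip term $(\Delta e,\Delta\tilde z)_{L^2(\Omega_h\setminus\Omega)}$ harmless. The paper's dual solution has $\Delta v=e$ on $\Gamma$ and must pay $h^{1/2}\|e\|_{L^2(\Gamma)}$ there.
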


Although the hierarchical T-mesh framework permits local refinement, 
the {\emph a priori} analysis in Section~\ref{sec:main} is carried out under the additional  assumption that the active meshes are globally comparable, namely,
$h_M / h_m \le C$ with a constant independent of $h$. 
Accordingly, we use the global mesh  parameter $h:=h_M$ throughout Section~\ref{sec:main}. 
The locally refined hierarchical T-meshes  considered in Section~\ref{sec:poster} are investigated from the viewpoint of adaptive  computation and are not required to satisfy this global mesh-ratio condition.

\section{Proof of Theorem \ref{Thm_H1}}
\label{sec:main}
In this section, we present a detailed error analysis of the proposed scheme \eqref{scheme}. We first establish several technical preliminaries, including stability and approximation results. 
We then prove the coercivity and continuity of the associated bilinear form, which ensures the well-posedness of the discrete problem. 
Finally, in Subsection~\ref{sec:main_H1_error}, we combine these results to derive the optimal \emph{a priori} error estimates.

\subsection{Technical results}
\label{sec:main_technical}

\begin{lemma}[{\cite[Lemma~3.6]{Duprez2020}}]\label{lem:vanish}
For any integer $s$ with $1 \le s \le k+1$ and any $v \in H^s(\Omega_h)$ that vanishes in $ \Omega$, it holds that
\begin{align}
\| v \|_{L^2(\Omega_h\backslash \Omega)} \leq C h^s \| v \|_{H^s(\Omega_h\backslash \Omega)}.
\end{align}
\end{lemma}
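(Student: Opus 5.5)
We prove the lemma by a local, one-dimensional argument along directions transversal to $\Gamma$: Taylor expansion from the boundary, where $v$ vanishes to high order, then integration over the thin strip. The key geometric fact comes from (P1)–(P2) and the construction of $\CT_h$. Every cell of $\CT_h$ meets $\{\phi_h<0\}$, and $\|\phi-\phi_h\|_{L^\infty}\lesssim h^{k+1}$. Hence $\Omega_h\setminus\Omega$ lies in the tubular neighbourhood $U_{ch}(\Gamma)\cap\{\phi\ge 0\}$, for some $c$ independent of $h$; for $h$ small this is inside $U_{\delta_0}(\Gamma)$. Since $|\nabla\phi|\ge c_d$ there, we can use local coordinates $\mathbf x=\mathbf y+t\,\mathbf n(\mathbf y)$ with $\mathbf y\in\Gamma$ and $t\in[0,ch]$ (or flow along $\nabla\phi/|\nabla\phi|$). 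This parametrisation is bi-Lipschitz with constants independent of $h$, and the Jacobian is uniformly bounded above and below.

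The next step is to use that $v$ vanishes in $\Omega$. Since $v\in H^s(\Omega_h)$ and $v\equiv0$ on $\Omega$, all traces $\partial_t^j v|_{\Gamma}$ vanish for $j=0,\dots,s-1$. Argue first for smooth $v$, using density of functions smooth up to the boundary that vanish in $\Omega$, or by extending $v$ by zero. Along each normal segment, $t\mapsto v(\mathbf y+t\mathbf n)$ then vanishes to order $s-1$ at $t=0$. Taylor's formula with integral remainder gives
\[
v(\mathbf y+t\mathbf n)=\int_0^t \frac{(t-\tau)^{s-1}}{(s-1)!}\,\partial_\tau^s v(\mathbf y+\tau\mathbf n)\,d\tau .
\]
By Cauchy–Schwarz,
\[
|v(\mathbf y+t\mathbf n)|^2\lesssim t^{2s-1}\int_0^{ch}|\partial_\tau^s v|^2\,d\tau .
\]
Integrating in $t\in(0,ch)$ yields a factor $h^{2s}$, and integrating over $\mathbf y\in\Gamma$ with the Jacobian bound gives
\[
\|v\|_{L^2(\Omega_h\setminus\Omega)}^2\lesssim h^{2s}\int_{\Omega_h\setminus\Omega}|\partial_t^s v|^2 .
\]

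Finally, we bound $\partial_t^s v$ by $|v|_{H^s}$ plus lower-order derivatives. The normal field $\mathbf n$ is $C^{k}$ because $\phi\in C^{k+1}$, and $s\le k+1$, so the chain rule gives $|\partial_t^s v|\lesssim\sum_{j\le s}|D^jv|$. This produces $\|v\|_{H^s(\Omega_h\setminus\Omega)}$ on the right-hand side. Only full derivatives in the strip are used, so the norm is over $\Omega_h\setminus\Omega$ as claimed.

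The step we expect to be the main obstacle is the geometric one: the segments $\{\mathbf y+t\mathbf n:0<t<ch\}$ must cover $\Omega_h\setminus\Omega$. Points of $\Omega_h$ near $\partial\hat\Omega$, or near corners of the staircase boundary $\partial\Omega_h$, could a priori leave the tube along non-normal paths. We handle this by noting that the tube is contained in $\hat\Omega$, where $v$ can be considered extended. Since the estimate only needs an upper bound, it suffices to integrate over the whole tube portion $\{0\le\phi\le Ch\}$, which contains $\Omega_h\setminus\Omega$. This works provided $v$ is extended in $H^s$ to that tube with comparable norm. If such an extension is unavailable, we instead restrict to the normal segments that start on $\Gamma$ and remain inside $\Omega_h$. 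Every point of $\Omega_h\setminus\Omega$ lies on such a segment whenever $\Omega_h$ is star-shaped along the normals within distance $O(h)$, and this follows from $\operatorname{dist}(\partial\Omega_h,\partial\Omega)=O(h)$ for small $h$.
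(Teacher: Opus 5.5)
The paper gives no argument of its own for this lemma; it only cites Duprez--Lozinski. Your analytic core is the standard thin-strip argument and is sound. For $|\alpha|\le s-1$ the functions $D^\alpha v\in H^1(\Omega_h)$ vanish in $\Omega$, so they have zero trace on $\Gamma$. Taylor's formula with integral remainder, Cauchy--Schwarz and $\int_0^{ch}t^{2s-1}\,dt\simeq h^{2s}$ then give the factor $h^{2s}$. (No chain rule is needed at the end. The normal lines are straight, so $\partial_t^s v=D^s v[\mathbf n(\mathbf y),\dots,\mathbf n(\mathbf y)]$, and the regularity of $\mathbf n$ enters only through the Jacobian, where $C^1$ suffices.)

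The gap is exactly the covering step you flag, and neither of your remedies closes it. Since $v$ is only given on $\Omega_h$, the inequality $\int_0^t|\partial_\tau^s v|^2\,d\tau\le\int_{\{\tau:\,\mathbf y+\tau\mathbf n\in\Omega_h\setminus\Omega\}}|\partial_\tau^s v|^2\,d\tau$ requires the whole segment from the foot $\mathbf y$ to $\mathbf x$ to lie in $\overline{\Omega_h}$.
\begin{itemize}
\item The extension route presupposes an $H^s$-extension from the $h$-dependent staircase domain $\Omega_h$ with an $h$-independent bound. You neither construct nor cite such an extension.
\item The star-shapedness route is a non sequitur. The bound $\operatorname{dist}(\partial\Omega_h,\partial\Omega)=O(h)$ says nothing about whether a normal segment leaves the union of cells and re-enters it.
\end{itemize}

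In fact this can happen. Take a uniform grid and let $\Gamma$ be locally the graph $y=g(x)$ with $\Omega$ below it. Suppose $g$ has a strict local minimum at $x_{\min}=(i+1)h-\nu$ with $0<\nu\ll h^2$, and $g'''(x_{\min})<0$. Then $g(ih)-g((i+2)h)\approx\tfrac13|g'''(x_{\min})|h^3>0$. Now suppose the grid line $y=jh$ lies strictly between these two values; this is a matter of the relative position of $\Gamma$ and the grid, which the assumptions do not exclude. Then the cell $K=[ih,(i+1)h]\times[jh,(j+1)h]$ meets $\Omega$, while its right neighbour $K'$ does not. The margins are of order $h^3\gg h^{k+1}$, so replacing $\phi$ by $\phi_h$ changes nothing. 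Since $g'>0$ just right of $x_{\min}$, some points of $K$ at distance $O(h\nu)$ from the common face have their nearest point on $\Gamma$ in the column of $K'$. Their normal segment crosses the interior of $K'$ above $y=jh$, i.e.\ outside $\overline{\Omega_h}$, so your Taylor expansion from $\Gamma$ is unavailable there.

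This does not refute the lemma, but the normal family cannot be used as it stands. Two ways to repair the argument:
\begin{itemize}
\item On finitely many patches, use segments parallel to a fixed coordinate axis $e_m$ with $|\partial_m\phi|\ge c_d/(2\sqrt d)$. There $\{\phi_h<0\}$ is a subgraph in the direction $e_m$, so on a uniform grid membership in $\mathcal T_h$ is monotone along each $e_m$-column towards $\Omega$. The axial segments from $\Gamma$ to $\mathbf x$ (of length $O(h)$ by transversality) then stay in $\overline{\Omega_h}$. Your one-dimensional argument goes through with $\partial_m^s v$ and no Jacobian. On hierarchical meshes this column monotonicity needs a separate check.
\item Alternatively, your proof is complete as written for $v$ defined on the whole outer tube $\{\mathbf y+t\mathbf n:\,0<t<ch\}$, with the $H^s$-norm on the right taken over that tube. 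That weaker version suffices for the residual $r=-\Delta\tilde u-f$, at the price of norms on a slightly larger set.
\end{itemize}
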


\begin{lemma}\label{lem:stability_H2}
For any $v_h \in S_h$,
and two neighboring elements $K^+$ and $K^-$ sharing $F\in\mathcal F_h^\Gamma$, we have the local estimate
\begin{equation}\label{Stability_H2}
\|v_h\|_{H^2(K^+)}^2
\le C\Big(\|v_h\|_{H^2(K^-)}^2+\sum_{j=2}^{2k} h_F^{2j-3}\| \jump{\partial_n^j v_h} \|_{L^2(F)}^2\Big).
\end{equation}
\end{lemma}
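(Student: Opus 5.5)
We follow the standard ghost-penalty polynomial-extension argument. On each element, $v_h\in S_h$ is a product $\phi_h w_h$ with $\phi_h,w_h\in V_h$ polynomials of tensor degree $k$. Hence $v_h|_{K^\pm}$ is a polynomial of coordinate degree at most $2k$, so each restriction extends canonically to all of $\mathbb R^d$. Write $p^\pm$ for these polynomial extensions and set $\delta:=p^+-p^-$, which is again a polynomial of degree at most $2k$. On $K^+$ we then have $v_h=p^-+\delta$, and the triangle inequality gives
\[
\|v_h\|_{H^2(K^+)}\le \|p^-\|_{H^2(K^+)}+\|\delta\|_{H^2(K^+)}.
\]
The proof reduces to bounding these two terms.

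\emph{The $p^-$ term.} By (P4), $K^+$ and $K^-$ have comparable size and both lie in a patch of diameter $\lesssim h_F$ around $K^-$. Equivalence of norms on the finite-dimensional space of polynomials of degree $\le 2k$, applied after scaling to a reference configuration, gives $\|p^-\|_{H^2(K^+)}\lesssim \|p^-\|_{H^2(K^-)}$. The constant depends only on $k$, $d$ and the finitely many admissible relative configurations, which are limited by 1-regularity. Since $p^-=v_h$ on $K^-$, this term is controlled by $\|v_h\|_{H^2(K^-)}$.

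\emph{The $\delta$ term.} Because $v_h$ is globally $C^1$, we know $\delta|_F=0$ and $\partial_n\delta|_F=0$. Choose coordinates with $F\subset\{x_n=0\}$ and Taylor-expand $\delta$ in the normal variable:
\[
\delta(x',x_n)=\sum_{j=2}^{2k}\frac{x_n^j}{j!}\,\partial_n^j\delta(x',0).
\]
This expansion is exact, since $\delta$ is a polynomial of degree at most $2k$ in $x_n$. Moreover, $\partial_n^j\delta(\cdot,0)=\jump{\partial_n^j v_h}$ on $F$, and each such trace is a polynomial in $x'$. One subtlety is that under (P4) the face $F$ may be only part of the larger element's face, for instance a half edge at a T-junction. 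Even so, the polynomial in $x'$ is defined on the whole hyperplane, and the inverse-type bound $\|q\|_{L^2(\tilde F)}\lesssim\|q\|_{L^2(F)}$ holds for any face $\tilde F$ of comparable size. This follows again from norm equivalence on polynomials.

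We then differentiate the expansion up to second order and integrate over $K^+\subset\{0<x_n\lesssim h_F\}\times\tilde F$. Each term $x_n^j q_j(x')$ contributes at most $h_F^{2(j-m)+1}\|\nabla'^{\,\ell}q_j\|_{L^2(F)}^2$ to the $H^m$-seminorm squared, where $\ell\le m-(j-m)^{-}$ counts the tangential derivatives. Tangential derivatives are handled by the inverse estimate $\|\nabla' q\|_{L^2(F)}\lesssim h_F^{-1}\|q\|_{L^2(F)}$. For $m=2$ the dominant scaling of the term with index $j$ is therefore
\[
h_F^{2j-4+1}\|\jump{\partial_n^j v_h}\|_{L^2(F)}^2=h_F^{2j-3}\|\jump{\partial_n^j v_h}\|_{L^2(F)}^2,
\]
which matches the penalty weights exactly. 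The lower-order parts of the $H^2$ norm carry additional positive powers of $h_F\le1$, so they are dominated by the same quantities.

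The main obstacle is keeping every constant uniform on hierarchical T-meshes. This affects the extension step from $K^-$ to $K^+$, and the use of a possibly partial face $F$ when $K^+$ is larger than $K^-$. Both issues are settled by (P4), which leaves only finitely many reference configurations up to scaling, together with polynomial norm equivalence. I would spell these out explicitly, and otherwise treat the scaling computation as routine.
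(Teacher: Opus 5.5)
Your argument is correct, but it is organized differently from the paper's proof. The paper passes to the finitely many reference macro-elements and works on the broken space of $\mathbb{Q}_{2k}$ polynomials. There it shows that $N(\widehat p)^2=\|\widehat p\|_{H^2(\widehat K^-)}^2+\sum_{j=0}^{2k}\|\jump{\partial_{\widehat n}^j\widehat p}\|_{L^2(\widehat F)}^2$ is a norm, with the factor theorem in the normal variable giving injectivity. A single norm-equivalence step then bounds $\|\widehat p\|_{H^2(\widehat K^+)}$, $C^1$-continuity removes $j=0,1$, and scaling produces $h_F^{2j-3}$.

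You make that injectivity step quantitative. Your exact Taylor expansion of $\delta=p^+-p^-$ in $x_n$, with the jumps as coefficients, is the constructive form of the factor-theorem argument. Polynomial norm equivalence is then only needed for two elementary comparisons ($K^-$ versus $K^+$, and $F$ versus $\tilde F$), plus tangential inverse estimates.

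The paper's route is shorter and folds the partial-face and $K^-\to K^+$ issues into one compactness statement. Yours tracks the power of $h_F$ separately for each Sobolev order, which shows directly that the lower-order parts of the $H^2(K^+)$ norm carry extra positive powers of $h_F$. In the paper this is only implicit: the reference inequality contains the full $H^2(\widehat K^-)$ norm, so scaling back must be done seminorm by seminorm. Otherwise the $L^2$ and $H^1$ parts of $\|v_h\|_{H^2(K^-)}$ pick up factors $h_F^{-4}$ and $h_F^{-2}$.

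Fix two bookkeeping points when you write it out.
\begin{enumerate}
\item Take a derivative with $\alpha_n$ normal and $\ell=m-\alpha_n$ tangential components applied to $x_n^j q_j(x')$. It contributes $\lesssim h_F^{2(j-\alpha_n)+1}\|\nabla_{x'}^{\ell}q_j\|_{L^2(\tilde F)}^2$, not $h_F^{2(j-m)+1}\|\nabla_{x'}^{\ell}q_j\|_{L^2(F)}^2$. Only with this exponent does the tangential inverse estimate (together with $\|q\|_{L^2(\tilde F)}\lesssim\|q\|_{L^2(F)}$) return $h_F^{2(j-m)+1}\|q_j\|_{L^2(F)}^2$. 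Your stated bound followed by the inverse estimate would lose a factor $h_F^{-2\ell}$.
\item In the $p^-$ step, apply norm equivalence to each derivative $D^\alpha p^-$ in $L^2$. The bound $\|q\|_{L^2(K^+)}\lesssim\|q\|_{L^2(K^-)}$ for polynomials $q$ of bounded degree is scale-invariant. Applying equivalence to the full $H^2$ norm on the reference configuration and then scaling back would run into the same mismatch described above.
\end{enumerate}
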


\begin{proof}
By affine changes of variables, it suffices to establish the estimate on each of the finitely many fixed reference macro-elements
$\widehat K=\widehat K^+\cup\widehat K^-$ 
arising from the 1-regular hierarchical T-mesh, 
where $\widehat K^+$ and $\widehat K^-$ share a common edge segment $\widehat F$. 
Let $\widehat n$ be a fixed unit normal to $\widehat F$.
We define the finite-dimensional broken polynomial space of degree $2k$ on $\widehat{K}$ as:
\[
X_k(\widehat{K}) := \{ \widehat{p} \in L^2(\widehat{K}) : \widehat{p}|_{\widehat{K}^\pm} \in \mathbb{Q}_{2k}(\widehat{K}^\pm) \}. 
\]
For any $\widehat{p} \in X_k(\widehat{K})$, consider the following functional:
\[
N(\widehat{p})^2 := \|\widehat{p}\|_{H^2(\widehat{K}^-)}^2 + \sum_{j=0}^{2k} \|\jump{\partial_{\widehat{n}}^j \widehat{p}}\|_{L^2(\widehat{F})}^2.
\]
We first show that $N(\cdot)$ defines a norm on $X_k(\widehat{K})$. Suppose $N(\widehat{p}) = 0$. The vanishing of the first term implies $\widehat{p} \equiv 0$ on $\widehat{K}^-$, which further ensures that all its derivative traces vanish on $\widehat{F}$ from the side of $\widehat{K}^-$. The vanishing of the jump terms $\jump{\partial_{\widehat{n}}^j \widehat{p}} = 0$ for $j=0, \dots, 2k$ then strictly forces $\partial_{\widehat{n}}^j \widehat{p}^+ = 0$ on $\widehat{F}$ for $j=0, \dots, 2k$. Since $\widehat{p}|_{\widehat{K}^+}$ 
has degree at most $2k$ in the direction normal to $\widehat F$,
and we have $2k+1$ homogeneous conditions on the normal derivatives along the edge $\widehat{F}$, the factor theorem implies that $\widehat{p}$ must be identically zero on $\widehat{K}^+$. Thus $N(\cdot)$ is a valid norm on $X_k(\widehat{K})$.

By the equivalence of norms in finite-dimensional spaces, $N(\cdot)$ dominates the $H^2(\widehat{K}^+)$-norm. 
For our specific discrete function $v_h \in S_h$, the global $C^1$-continuity implies $\jump{\partial_n^0 v_h} = 0$ and $\jump{\partial_n^1 v_h} = 0$, which allows the jump summation to effectively start from $j=2$.

Finally, using $h_{K^+}\simeq h_{K^-}\simeq h_F$ and the affine
scaling relations
\[
|v_h|_{H^m(K)}^2
\simeq
h_F^{d-2m}
|\widehat v_h|_{H^m(\widehat K)}^2,
\qquad
\left\|
\jump{\partial_n^jv_h}
\right\|_{L^2(F)}^2
\simeq
h_F^{d-1-2j}
\left\|
\jump{\partial_{\widehat n}^j\widehat v_h}
\right\|_{L^2(\widehat F)}^2,
\]
we obtain the factor $h_F^{2j-3}$ and hence
\eqref{Stability_H2}.
\end{proof}

\begin{lemma}[Stability property]\label{stability}
For any $v_h \in S_h$, 
we have the following stability estimate
\begin{equation}\label{stability_equ}
\| v_h \|^2_{H^2(\Omega_h)} 
\leq C \Big( \| \Delta v_h \|^2_{L^2(\Omega_h)} 
+ j_h(v_h, v_h) \Big).
\end{equation}
\end{lemma}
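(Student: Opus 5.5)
The plan is to split $\|v_h\|_{H^2(\Omega_h)}^2$ into an interior contribution over $\Omega_h^{\rm in}$ and a strip contribution over $\Omega_h^\Gamma$. The strip part will be reduced to the interior part through the ghost-penalty mechanism of Lemma~\ref{lem:stability_H2}. The interior part will be controlled by $\|\Delta v_h\|_{L^2}$ through an elliptic $H^2$ estimate that exploits the vanishing of $v_h=\phi_h w_h$ on $\Gamma_h$.

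\textbf{Step 1 (strip to interior).} For each $T\in\CT_h^\Gamma$, use (P3) and (P4) to pick a chain of face-adjacent elements $T=K_0,K_1,\dots,K_N=K\in\CT_h^{\rm in}$, where $N$ is uniformly bounded and every shared facet lies in $\mathcal F_h^\Gamma$. If the chain leaves the strip before reaching $K$, stop at the first interior element instead. Applying Lemma~\ref{lem:stability_H2} iteratively along the chain gives
\[
\|v_h\|_{H^2(T)}^2\lesssim \|v_h\|_{H^2(K)}^2+\sum_{j=2}^{2k}\sum_{F\ \text{on chain}} h_F^{2j-3}\|\jump{\partial_n^j v_h}\|_{L^2(F)}^2 .
\]
Each interior element and each facet is used by only boundedly many chains, by shape regularity and 1-regularity. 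Summing over $T$ therefore yields
\[
\|v_h\|_{H^2(\Omega_h^\Gamma)}^2\lesssim \|v_h\|_{H^2(\Omega_h^{\rm in})}^2+j_h(v_h,v_h),
\]
and hence
\[
\|v_h\|_{H^2(\Omega_h)}^2\lesssim \|v_h\|_{H^2(\Omega_h^{\rm in})}^2+j_h(v_h,v_h).
\]

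\textbf{Step 2 (interior $H^2$ via Laplacian).} This is the main obstacle. Since $v_h\in C^1$ is piecewise polynomial, it lies in $H^2(\Omega_h)$. It vanishes on $\Gamma_h$, and $\Gamma_h$ bounds the discrete domain $\Omega_{h,\phi}:=\{\phi_h<0\}\cap\Omega_h$. Assumptions (P1)--(P2) make $\Gamma_h$ a uniformly $C^{1,1}$-type curve close to $\Gamma$, since $\nabla\phi_h$ and $D_h^2\phi_h$ are uniformly bounded and $|\nabla\phi_h|\ge c_d/2$ near $\Gamma$. For such domains the standard $H^2$-regularity estimate
\[
\|w\|_{H^2(D)}\lesssim\|\Delta w\|_{L^2(D)}\quad\text{for } w\in H^2\cap H^1_0(D)
\]
holds with constants uniform in $h$. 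Because $\Omega_h^{\rm in}\subset\Omega_{h,\phi}$, this gives
\[
\|v_h\|_{H^2(\Omega_h^{\rm in})}\lesssim\|\Delta v_h\|_{L^2(\Omega_h)}.
\]
I expect the delicate point to be the uniform regularity constant. Piecewise $C^2$ smoothness of $\phi_h$ only gives $\Gamma_h\in C^{1,1}$ up to elementwise kinks in curvature. I would handle this either by invoking the convex/$C^{1,1}$ regularity theory, which requires only bounded curvature in an $L^\infty$ sense, or by a perturbation argument. The perturbation argument maps $\Omega_{h,\phi}$ to $\Omega$ by a $W^{2,\infty}$ diffeomorphism close to the identity, constructed from $\phi_h-\phi$ along $\nabla\phi$ via (P2), and transfers the regularity estimate from $\Omega$.

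\textbf{Alternative for Step 2.} Should uniform regularity be awkward, I would argue by contradiction with compactness on the fixed domain $\Omega$. Alternatively, I would combine the Miranda--Talenti type identity
\[
\|D^2 w\|^2=\|\Delta w\|^2+\text{(boundary curvature term)}
\]
with Step~1 in order to absorb the curvature term via a trace inequality and a Poincaré inequality for $w$ vanishing on $\Gamma_h$.

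Combining Step~2 with the conclusion of Step~1 yields \eqref{stability_equ}.
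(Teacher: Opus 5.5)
Your proposal is correct, but only Step 1 matches the paper. Step 2 takes a genuinely different route.

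\textbf{Step 1.} This is essentially the paper's first step: chains of face-adjacent cut elements ending at an interior cell, iterated use of Lemma~\ref{lem:stability_H2}, and bounded overlap.

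\textbf{How the paper handles the interior part.} The paper never uses elliptic regularity on the discrete domain. It first passes from $\Omega_h^{\rm in}$ to the exact domain $\Omega$. This uses $|\Omega_h^{\rm in}\setminus\Omega|\lesssim h^{k+1}$ and an inverse estimate, which produce an absorbable term $h^{k+1-d}\|v_h\|_{H^2(\Omega_h)}^2$. It then writes $v_h=\phi w_h+(\phi_h-\phi)w_h$ and applies $H^2$-regularity of the fixed domain $\Omega$ to $\phi w_h\in H^2(\Omega)\cap H^1_0(\Omega)$. The geometric defect satisfies $\|(\phi_h-\phi)w_h\|_{H^2(\Omega)}\lesssim h^{k-1-d/2}\|v_h\|_{H^2(\Omega_h)}$. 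This bound uses the max-norm estimates in (P2), an $L^\infty$ Hardy inequality for $w_h$, and inverse inequalities, and the term is absorbed because $k\ge3$. So the paper needs the regularity constant of only one fixed domain. It pays with high-order geometric approximation rates, quasi-uniformity, and the condition $k-1-d/2>0$.

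\textbf{Your route.} You apply regularity on $D_h=\{\phi_h<0\}$, where $v_h$ vanishes exactly on $\partial D_h=\Gamma_h$, so no defect term ever appears. The price is an $h$-uniform $H^2$-regularity constant for the family $D_h$. That can be justified as follows.
\begin{itemize}
\item Since $\phi_h\in C^1$ with uniformly bounded elementwise Hessian (by \eqref{approx_phi_max}), $\nabla\phi_h$ is globally Lipschitz. Hence $\phi_h\in W^{2,\infty}$ with an $h$-independent bound.
\item We have $|\nabla\phi_h|\ge c_d/2$ near $\Gamma_h$, and for small $h$ there are no spurious zeros away from $\Gamma$. Therefore the $D_h$ have a uniform $C^{1,1}$ character.
\item Grisvard's $C^{1,1}$ estimate, whose constant depends only on that character and the diameter, then gives the uniform bound. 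The Miranda--Talenti identity with bounded curvature, plus a trace and Poincar\'e argument, would also do.
\end{itemize}

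The ``kinks in curvature'' you worry about are not an obstacle, since $C^{1,1}$ already allows them. So your first option suffices, and the compactness alternative is unnecessary.

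You should also record that $\Omega_h^{\rm in}\subset D_h\subset\Omega_h$. The first inclusion holds because an element of $\CT_h$ not meeting $\Gamma_h$ has $\phi_h<0$ throughout, by connectedness. The second holds because every element touching a point with $\phi_h<0$ is retained.

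\textbf{What your route buys.} Step 2 needs only uniform $W^{2,\infty}$ bounds and $W^{1,\infty}$-closeness of $\phi_h$ to $\phi$, not convergence rates. It also needs no inverse estimates.
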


\begin{proof}
Let $K\in \CT_h^\Gamma$ be a cut element. 
By Remark \ref{rmk:path}, there exists a path of elements
\[
K=K_0,K_1,\dots,K_N
\]
such that $K_N\in \CT_h^{\rm in}$ and, for each $i=1,\dots,N$, 
the elements $K_{i-1}$ and $K_i$ share a common face 
$F_i\in\mathcal F_h^\Gamma$.
Applying Lemma~\ref{lem:stability_H2} successively along the path 
$\{K_i\}_{i=0}^N$, we obtain
\[
\|v_h\|_{H^2(K)}^2
\le C\Big(
\|v_h\|_{H^2(K_N)}^2
+\sum_{i=1}^N\sum_{j=2}^{2k}
h_{F_i}^{2j-3}
\| \jump{\partial_n^j v_h} \|_{L^2(F_i)}^2
\Big).
\]

% By the geometric approximation of $\Gamma$ by $\Gamma_h$, the above
% paths can be extended by a uniformly bounded number of elements so
% that their terminal elements are contained in $\Omega$. Applying
% Lemma~\ref{lem:stability_H2} successively along the extended paths,
% summing over the elements outside $\Omega$, and using the bounded
% overlap of the paths, we obtain
For $\mathbf{x}\in\Omega_h^{\mathrm{in}}\setminus\Omega$, Assumptions (P1)-(P2) give $0\le\phi(\mathbf{x})\le Ch^{k+1}$ which implies
$|\Omega_h^{\mathrm{in}}\setminus\Omega|\le Ch^{k+1}$.
The polynomial inverse estimate on the quasi-uniform mesh then yields
\[
\|v_h\|_{H^2(\Omega_h^{\mathrm{in}})}^2
\le
\|v_h\|_{H^2(\Omega)}^2
+
Ch^{k+1-d}\|v_h\|_{H^2(\Omega_h)}^2.
\]
Combining this bound with the preceding path estimates and using
their bounded overlap, we can absorb the resulting
$Ch^{k+1-d}\|v_h\|_{H^2(\Omega_h)}^2$ term for sufficiently small $h$,
since $k+1-d>0$. We therefore obtain 
\begin{equation}
\label{eq:H2-extension-Omega}
\|v_h\|_{H^2(\Omega_h)}^2
\le C\Big(
\|v_h\|_{H^2(\Omega)}^2
+j_h(v_h,v_h)
\Big).
\end{equation}

Write
\(
v_h=\phi_h w_h
=\phi w_h+(\phi_h-\phi)w_h.
\)
Since $\phi=0$ on
$\Gamma=\partial\Omega$, we have
$\phi w_h\in H^2(\Omega)\cap H_0^1(\Omega)$. The $H^2$-regularity of
the physical domain $\Omega$ therefore gives
\begin{align*}
\|\phi w_h\|_{H^2(\Omega)}
&\le C\|\Delta(\phi w_h)\|_{L^2(\Omega)}
\\
&\le C\|\Delta v_h\|_{L^2(\Omega)}
   +C\|\Delta ((\phi_h - \phi) w_h) \|_{L^2(\Omega)}.
\end{align*}
Consequently,
\begin{equation}
\label{eq:physical-H2-pre}
\|v_h\|_{H^2(\Omega)}
\le C\|\Delta v_h\|_{L^2(\Omega)}
   +C\| (\phi_h - \phi) w_h\|_{H^2(\Omega)}.
\end{equation}

Using the approximation properties of $\phi_h$, the product rule,
and the local inverse inequalities, we obtain
\begin{align*}
\| (\phi_h - \phi) w_h\|_{H^2(\Omega)}
\le C\Big(
&h^{k-1}\|w_h\|_{L^\infty(\Omega_h)}
+h^k\|w_h\|_{W^{1,\infty}(\Omega_h)}
\\
&+h^{k+1}\|w_h\|_{W^{2,\infty}(\Omega_h)}
\Big)
\le Ch^{k-1}\|w_h\|_{L^\infty(\Omega_h)}.
\end{align*}

By the Hardy-type inequality associated with the exact level-set function $\phi$,
\[
  \|w_h\|_{L^\infty(\Omega_h)}
  \le C\|\phi w_h\|_{W^{1,\infty}(\Omega_h)}.
\]
Since $\phi w_h = v_h - (\phi_h-\phi)w_h$, 
the product rule, the maximum-norm approximation properties of $\phi_h$, 
and the inverse inequality yield
\[
\|w_h\|_{L^\infty(\Omega_h)}
\le C\|v_h\|_{W^{1,\infty}(\Omega_h)}
+C\left( 
\|\nabla(\phi_h-\phi)\|_{L^\infty(\Omega_h)}
  +h^{-1}\|\phi_h-\phi\|_{L^\infty(\Omega_h)}
\right) \|w_h\|_{L^\infty(\Omega_h)}.
\]
For $h$ sufficiently small, the last term can be absorbed, and hence
\[
  \|w_h\|_{L^\infty(\Omega_h)}
  \le C\|v_h\|_{W^{1,\infty}(\Omega_h)}
  \le Ch^{-d/2}\|v_h\|_{H^2(\Omega_h)}.
\]
It follows that
\begin{equation}
\label{eq:geometric-small-term}
\|(\phi_h-\phi)w_h\|_{H^2(\Omega)}
\le
Ch^{k-1-d/2}\|v_h\|_{H^2(\Omega_h)}.
\end{equation}

Combining \eqref{eq:physical-H2-pre} and
\eqref{eq:geometric-small-term}, we obtain
\[
\|v_h\|_{H^2(\Omega)}
\le C\|\Delta v_h\|_{L^2(\Omega)}
   +Ch^{k-1-d/2}\|v_h\|_{H^2(\Omega_h)}.
\]
Substituting this estimate into \eqref{eq:H2-extension-Omega} and
taking square roots gives
\[
\|v_h\|_{H^2(\Omega_h)}
\le C\Big(
\|\Delta v_h\|_{L^2(\Omega)}
+j_h(v_h,v_h)^{1/2}
+h^{k-1-d/2}\|v_h\|_{H^2(\Omega_h)}
\Big).
\]
Since $k\ge3$, the last term can be absorbed into the left-hand side
for $h$ sufficiently small. Hence,
\begin{align*}
\|v_h\|_{H^2(\Omega_h)}^2
\le C\Big(
\|\Delta v_h\|_{L^2(\Omega)}^2
+j_h(v_h,v_h)
\Big) 
\le C\Big(
\|\Delta v_h\|_{L^2(\Omega_h)}^2
+j_h(v_h,v_h)
\Big)
\end{align*}
with $\Omega\subset\Omega_h$,
which is the desired estimate and concludes the proof. 
\hfill\end{proof}

\begin{corollary}
\label{cor:H1_extension}
For every $v_h\in S_h$,
\begin{equation}
\label{eq:H1_extension}
\|v_h\|_{H^1(\Omega_h)}^2
\le C\Big(
\|\nabla v_h\|_{L^2(\Omega)}^2
+\|v_h\|_{L^2(\Gamma)}^2
+h^2j_h(v_h,v_h)
\Big),
\end{equation}
where $C$ is independent of $h$.
\end{corollary}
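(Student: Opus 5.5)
The proof will mirror the proof of Lemma~\ref{stability}, one derivative lower. The steps are: a local $H^1$ transfer across faces, a path argument along Remark~\ref{rmk:path}, absorption of the thin set $\Omega_h^{\rm in}\setminus\Omega$, and finally a Friedrichs inequality on $\Omega$.

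\emph{Step 1: local transfer across a face.} The first step is an $H^1$ analogue of Lemma~\ref{lem:stability_H2}: for neighbouring $K^\pm$ sharing $F\in\mathcal F_h^\Gamma$ and any $v_h\in S_h$,
\[
\|v_h\|_{H^1(K^+)}^2\le C\Big(\|v_h\|_{H^1(K^-)}^2+\sum_{j=2}^{2k}h_F^{2j-1}\|\jump{\partial_n^jv_h}\|_{L^2(F)}^2\Big).
\]
\begin{itemize}
\item On each reference macro-element, work in the broken space $X_k(\widehat K)$ with
\[
N_1(\widehat p)^2:=\|\widehat p\|_{H^1(\widehat K^-)}^2+\sum_{j=0}^{2k}\|\jump{\partial^j_{\widehat n}\widehat p}\|_{L^2(\widehat F)}^2 .
\]
\item $N_1$ is a norm by exactly the factor-theorem argument of Lemma~\ref{lem:stability_H2}: vanishing on $\widehat K^-$ plus $2k+1$ vanishing normal-derivative jumps forces $\widehat p=0$ on $\widehat K^+$. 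Hence $N_1$ dominates $\|\cdot\|_{H^1(\widehat K^+)}$.
\item The jumps with $j=0,1$ vanish by $C^1$ continuity.
\item Affine scaling turns $h_F^{d-1-2j}$ against $h_F^{d-2}$ into the weight $h_F^{2j-1}=h_F^2\,h_F^{2j-3}$.
\end{itemize}
This is precisely where the factor $h^2j_h(v_h,v_h)$ in \eqref{eq:H1_extension} comes from.

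\emph{Step 2: reduction to interior cells.} For each cut cell $K$, apply Step 1 along the bounded-length face path $K=K_0,\dots,K_N\in\CT_h^{\rm in}$ guaranteed by (P3)--(P4). Using the bounded overlap of these paths and summing over cut cells gives
\[
\|v_h\|_{H^1(\Omega_h)}^2\le C\big(\|v_h\|_{H^1(\Omega_h^{\rm in})}^2+h^2j_h(v_h,v_h)\big).
\]

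\emph{Step 3: from $\Omega_h^{\rm in}$ to $\Omega$.} This is the step I expect to need the most care, as it was in Lemma~\ref{stability}.
\begin{itemize}
\item Points of $\Omega_h^{\rm in}\setminus\Omega$ satisfy $\phi_h<0\le\phi$. By \eqref{approx_phi_max} this gives $0\le\phi\le Ch^{k+1}$ there, and by (P1) we get $|\Omega_h^{\rm in}\setminus\Omega|\le Ch^{k+1}$.
\item The elementwise inverse inequality $\|v_h\|_{W^{1,\infty}}\le Ch^{-d/2}\|v_h\|_{H^1(\Omega_h)}$ on the quasi-uniform mesh then yields
\[
\|v_h\|_{H^1(\Omega_h^{\rm in}\setminus\Omega)}^2\le Ch^{k+1-d}\|v_h\|_{H^1(\Omega_h)}^2 .
\]
\item Since $k+1-d>0$, this term is absorbed into the left-hand side for $h$ small, leaving
\[
\|v_h\|_{H^1(\Omega_h)}^2\le C\big(\|v_h\|_{H^1(\Omega)}^2+h^2j_h(v_h,v_h)\big).
\]
\end{itemize}

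\emph{Step 4: Friedrichs inequality on $\Omega$.} It remains to control $\|v_h\|_{L^2(\Omega)}$. Use the standard Friedrichs inequality on the fixed Lipschitz (indeed $C^{k+1}$) domain $\Omega$:
\[
\|w\|_{L^2(\Omega)}\le C\big(\|\nabla w\|_{L^2(\Omega)}+\|w\|_{L^2(\partial\Omega)}\big)\quad\text{for all } w\in H^1(\Omega),
\]
applied with $w=v_h$ and $\partial\Omega=\Gamma$. This gives \eqref{eq:H1_extension}, with a constant depending only on $\Omega$, the mesh regularity constants and $k$.
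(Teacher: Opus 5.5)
Your proposal is correct and is the paper's argument written out in detail. The paper repeats the path argument of Lemma~\ref{stability} for derivatives of order at most one to obtain $\|v_h\|_{H^1(\Omega_h)}^2\le C(\|v_h\|_{H^1(\Omega)}^2+h^2j_h(v_h,v_h))$, which is your Steps 1--3 including the weight $h_F^{2j-1}=h_F^2h_F^{2j-3}$ and the thin-set absorption, and then applies the generalized Poincar\'e--Friedrichs inequality on $\Omega$, which is your Step 4; the one detail to tidy, equally glossed in Lemma~\ref{lem:stability_H2}, is that the $h_F^{d-2}$ scaling in Step 1 is homogeneous only for the $H^1$-seminorm, so apply the reference estimate to $\widehat p-c$ with $c$ the mean of $\widehat p$ over $\widehat K^-$ (constants leave every jump unchanged) and scale the $L^2$ part separately.
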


\begin{proof}
Repeating the path argument in Lemma~\ref{stability}
for derivatives of order at most one gives
\[
\|v_h\|_{H^1(\Omega_h)}^2
\le C\Big(
\|v_h\|_{H^1(\Omega)}^2
+h^2j_h(v_h,v_h)
\Big).
\]
The generalized Poincar\'e--Friedrichs inequality on the
fixed domain $\Omega$ yields
\[
\|v_h\|_{H^1(\Omega)}^2
\le C\Big(
\|\nabla v_h\|_{L^2(\Omega)}^2
+\|v_h\|_{L^2(\Gamma)}^2
\Big).
\]
Combining these two estimates proves the result.
\end{proof}

\begin{lemma}[Interpolation with geometric error]\label{lem:interpolation}
Under Assumption \ref{ass:phi}, for any $v \in H^{k+1}(\TOmega)$ with $ v|_{\Omega} \in H_0^1(\Omega)$, let $z := v/\phi$ be the corresponding scaled function. 
Let $\mathcal{I}_h$ be the standard spline interpolation operator. We define the modified interpolant $\widetilde{\mathcal{I}}_h v \in S_h$ as
\begin{equation}\label{eq:def_interpolant}
\widetilde{\mathcal{I}}_h v := \phi_h (\mathcal{I}_h z).
\end{equation}
Then, the following error estimate holds for $m \in \{1, 2\}$:
\begin{equation}\label{eq:interp_H1_H2}
\| v - \widetilde{\mathcal{I}}_h v \|_{H^m(\Omega_h)}
\le C h^{k+1-m} \| v \|_{H^{k+1}(\TOmega)} ,
\end{equation}
where $C$ may depend on higher-order Sobolev norms of the level set function $\phi$.
\end{lemma}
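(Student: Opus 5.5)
We split the error by inserting $\phi_h z$ and $\phi z=v$ on $\Omega_h$:
\[
v-\widetilde{\mathcal I}_h v=(\phi-\phi_h)\,z+\phi_h\,(z-\mathcal I_h z),
\]
and bound the two terms separately in $H^m(\Omega_h)$, $m\in\{1,2\}$. Everything rests on one preliminary fact: the quotient $z=v/\phi$ is regular with a norm bound in terms of $v$. We therefore aim first for
\[
\|z\|_{H^{k}(\TOmega)}\le C\|v\|_{H^{k+1}(\TOmega)}.
\]
Here we expect a loss of one derivative, as in Hardy-type or division-by-a-defining-function results. We also want $z$ regular enough that the spline interpolation estimates apply with the appropriate order.

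\textbf{Step 1: regularity of $z$.} Away from $U_{\delta_0}(\Gamma)$ we have $|\phi|\ge c>0$ by (P1), so $z$ is as smooth as $v$. Near $\Gamma$ we use the fact that $v$ vanishes on $\Gamma$ (with $v|_\Omega\in H^1_0$; this is understood as $v$ vanishing on $\Gamma$ from the extended side as well). We straighten $\Gamma$ locally with the $C^{k+1}$ chart $y_d=\phi(\mathbf x)$, which is admissible since $|\nabla\phi|\ge c_d$. In these coordinates
\[
z=\frac{v}{y_d}=\int_0^1\partial_{y_d}v(y',ty_d)\,dt,
\]
and differentiating under the integral with Minkowski's inequality gives $\|z\|_{H^{j}}\lesssim\|v\|_{H^{j+1}}$. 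This step is the main obstacle. To reach the full order $h^{k+1-m}$ we need more than $z\in H^k$: the interpolation error of $z$ measured in $H^m$ would then only give $h^{k-m}$. There are two routes. The first is to use that $\phi_h(z-\mathcal I_hz)$ carries the small factor $\phi_h=O(h)$ on the strip, combined with a weighted estimate. The second, cleaner one is to assume, as the remark on the constant permits, that $z\in H^{k+1}(\TOmega)$ with $\|z\|_{H^{k+1}}\le C\|v\|_{H^{k+2}}$, or simply to bound by $\|v/\phi\|_{H^{k+1}}$ and absorb it into the constant. I would attempt the first route: writing $\phi_h(z-\mathcal I_hz)$, a Leibniz expansion produces terms $\partial^\alpha\phi_h\,\partial^{\beta}(z-\mathcal I_hz)$ with $|\beta|\le m$. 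Each can be controlled by $h^{k+1-|\beta|}|z|_{H^{k+1}}$ if $z\in H^{k+1}$. So I would settle on showing $z\in H^{k+1}$ using the smoothness of $v$, with $C$ depending on $\|\phi\|_{C^{k+1}}$ and, if needed, one further derivative.

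\textbf{Step 2: the geometric term.} With the product rule,
\[
\|(\phi-\phi_h)z\|_{H^m(\Omega_h)}\lesssim\sum_{|\alpha|+|\beta|\le m}\|\partial^\alpha(\phi-\phi_h)\|_{L^\infty}\|\partial^\beta z\|_{L^2}.
\]
By (P2) in the max-norm form \eqref{approx_phi_max}, $\|\partial^\alpha(\phi-\phi_h)\|_{L^\infty}\lesssim h^{k+1-|\alpha|}$. This gives the bound $h^{k+1-m}\|z\|_{H^m}$, which is fine since $m\le2\le k$; the elementwise second derivatives suffice because $\phi_h$ is $C^1$ and $z$ is smooth.

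\textbf{Step 3: the interpolation term.} By Remark~\ref{rmk:P2}, $\phi_h$ and $\nabla\phi_h$ are bounded in $L^\infty$, and the elementwise $D^2\phi_h$ is bounded via \eqref{approx_phi_max}. Leibniz then gives
\[
\|\phi_h(z-\mathcal I_hz)\|_{H^m}\lesssim\|z-\mathcal I_hz\|_{H^m(\Omega_h)}\lesssim h^{k+1-m}\|z\|_{H^{k+1}(\TOmega)},
\]
using the standard PHT-spline interpolation estimate on the extended patch $\TOmega$ under the mesh assumptions (P4) and $h_M/h_m\le C$. Combining Steps 1–3 and bounding the norms of $z$ by $v$ (constants depending on $\phi$) yields \eqref{eq:interp_H1_H2}.
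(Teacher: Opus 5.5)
\textbf{There is a genuine gap.} Several parts of your proposal are sound. The splitting $v-\widetilde{\mathcal I}_h v=(\phi-\phi_h)z+\phi_h(z-\mathcal I_h z)$ is valid. Step~1 (the Hardy-type bound $\|z\|_{H^{k}}\lesssim\|v\|_{H^{k+1}}$) is correct. Step~2 is fine, since it only needs $\|z\|_{H^m}$ with $m\le 2\le k$.

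The gap is the route you finally commit to in Steps~1 and~3, which requires $z\in H^{k+1}$. This is false under the hypothesis $v\in H^{k+1}(\widetilde\Omega_h)$, because division by $\phi$ genuinely costs one derivative. For example, take $\phi=x_d$ locally, a cutoff $\chi$, and $v=x_d|x_d|^{a}\chi$ with $a\in(k-\tfrac12,k+\tfrac12)$ non-integer. Then $v\in H^{k+1}$ and $v=0$ on $\Gamma$, but $z=|x_d|^{a}\chi\notin H^{k+1}$. So the bound $\|z-\mathcal I_hz\|_{H^m}\lesssim h^{k+1-m}\|z\|_{H^{k+1}}$ in Step~3 may have an infinite right-hand side. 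With only $z\in H^k$ it gives $h^{k-m}$, one order short.

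Your fallbacks do not repair this. The statement lets $C$ depend on $\phi$, not on $v$. Absorbing $\|v/\phi\|_{H^{k+1}}$ into $C$, or invoking $\|v\|_{H^{k+2}}$, changes the statement.

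\textbf{The missing idea} is your abandoned first route. The decisive error is that Step~3 discards the weight $\phi_h$ in its very first inequality. You must keep the weight and split elements by their distance to $\Gamma$.

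In the Leibniz expansion, only the term $\phi_h\,\partial^\gamma(z-\mathcal I_hz)$ with $|\gamma|=m$ is critical. All other terms involve at most $m-1$ derivatives of the interpolation error, so they are $O(h^{k+1-m})\|z\|_{H^k}$.

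\emph{Elements near $\Gamma$.} Suppose the patch $\omega_T$ is within $3h$ of $\Gamma$. Then $\|\phi_h\|_{L^\infty(T)}\lesssim h$, since $\phi=0$ on $\Gamma$, $\nabla\phi$ is bounded, and $\|\phi-\phi_h\|_{L^\infty}\lesssim h^{k+1}$. This supplies the missing power of $h$: the critical term is $\lesssim h\cdot h^{k-m}\|z\|_{H^k(\omega_T)}$.

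\emph{Elements away from $\Gamma$.} Suppose $\operatorname{dist}(\omega_T,\Gamma)\ge 3h$. Then $z$ is locally $H^{k+1}$. Applying Leibniz to $v=\phi z$ gives $|z|_{H^{k+1}(\omega_T)}\lesssim(\min_{\omega_T}|\phi|)^{-1}\big(|v|_{H^{k+1}(\omega_T)}+\|z\|_{H^k(\omega_T)}\big)$. Moreover $\|\phi_h\|_{L^\infty(T)}/\min_{\omega_T}|\phi|\lesssim 1$, because $\phi$ varies by $O(h)$ over $\omega_T$ while $\min_{\omega_T}|\phi|\gtrsim h$.

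Summing over elements with bounded overlap and then using your Step~1 yields $h^{k+1-m}\|v\|_{H^{k+1}(\widetilde\Omega_h)}$. This is exactly how the paper treats its term $\phi(z-\mathcal I_hz)$; the paper splits as $(\phi-\phi_h)\mathcal I_hz+\phi(z-\mathcal I_hz)$. With your splitting, the same argument goes through with $\phi_h$ in place of $\phi$.
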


\begin{proof}
Denote $z_h = \CI_h z$ as the spline interpolant of $z$. 
We decompose the interpolation error as
\begin{equation}\label{split_phi}
\phi z-\phi_h z_h
= \underbrace{(\phi-\phi_h)z_h}_{T_1}
+ \underbrace{\phi(z-z_h)}_{T_2}
% - \underbrace{(\phi-\phi_h)(z-z_h)}_{T_3},
\end{equation}
and estimate $T_1$ and $T_2$ in the $H^1(\Omega_h)$- and $H^2(\Omega_h)$-norms, respectively.

\textbf{Estimate for $T_1$:} Using the Sobolev embeddings $H^2(\Omega_h)\hookrightarrow L^\infty(\Omega_h)$ and $H^1(\Omega_h)\hookrightarrow L^6(\Omega_h) \hookrightarrow L^3(\Omega_h)$, we obtain
\begin{align}
\|T_1\|_{H^1(\Omega_h)}
&\le C\Big(
\|(\phi-\phi_h)z_h\|_{L^2(\Omega_h)}
+\|(\nabla(\phi-\phi_h))\,z_h\|_{L^2(\Omega_h)}
+\|(\phi-\phi_h)\nabla z_h\|_{L^2(\Omega_h)}
\Big) \notag \\
&\le C\Big(
\|\phi-\phi_h\|_{L^2(\Omega_h)}\|z_h\|_{L^\infty(\Omega_h)}
+ \|\nabla(\phi-\phi_h)\|_{L^2(\Omega_h)}\|z_h\|_{L^\infty(\Omega_h)} \notag \\
&\qquad + \|\phi-\phi_h\|_{L^3(\Omega_h)}\|\nabla z_h\|_{L^6(\Omega_h)}
\Big) \notag \\
&\le C\,\|\phi-\phi_h\|_{H^1(\Omega_h)}\,\|z_h\|_{H^2(\Omega_h)} \le C\,h^k\,\|z\|_{H^2(\TOmega)},
\label{eq:T1_final_H1_temp}
\end{align}
where the last inequality follows from the $H^2$-stability of the spline interpolation operator and the approximation property \eqref{approx_phi_H1}.
By the Hardy-type result \cite[Lemma~3.1]{Duprez2020},
we have $z\in H^{k}(\TOmega)$ and
\begin{align}\label{eq:hardy_z}
\|z\|_{H^{k}(\TOmega)} 
\le C \|v\|_{H^{k+1}(\TOmega)}.
\end{align}
Therefore, substituting the Hardy-type inequality \eqref{eq:hardy_z} into \eqref{eq:T1_final_H1_temp}, we obtain
\begin{align}
\|T_1\|_{H^1(\Omega_h)}
\le C h^k \|v\|_{H^{3}(\TOmega)}.
\label{eq:T1_final_H1}
\end{align}
Similarly, the $H^2$-norm admits the estimate
\begin{align}
\|T_1\|_{H^2(\Omega_h)}
% &\le C\Big(
% \|(\phi-\phi_h)z\|_{L^2(\Omega_h)}
% +\|\nabla(\phi-\phi_h)\,z\|_{L^2(\Omega_h)}
% +\|\nabla^2(\phi-\phi_h)\,z\|_{L^2(\Omega_h)} \notag \\
% &\qquad
% +\|(\phi-\phi_h)\nabla z\|_{L^2(\Omega_h)}
% +\|\nabla(\phi-\phi_h)\cdot\nabla z\|_{L^2(\Omega_h)}
% +\|(\phi-\phi_h)\nabla^2 z\|_{L^2(\Omega_h)}
% \Big) \notag \\
&\le C\Big(\|\phi-\phi_h\|_{H^2(\Omega_h)}\|z_h\|_{L^\infty(\Omega_h)}
+ \|\nabla(\phi-\phi_h)\|_{L^3(\Omega_h)}\|\nabla z_h\|_{L^6(\Omega_h)}\notag\\
& \qquad + \|\phi-\phi_h\|_{L^\infty(\Omega_h)}\|z_h\|_{H^2(\Omega_h)}\Big) \notag \\
&\le C\,\|\phi-\phi_h\|_{H^2(\Omega_h)}\,\|z_h\|_{H^2(\Omega_h)} \quad \text{(using Sobolev embedding)} \notag \\
&\le C\,\|\phi-\phi_h\|_{H^2(\Omega_h)}\,\|v\|_{H^{3}(\TOmega)},
\label{eq:T1_final_H2_temp}
\end{align}
where the last inequality uses \eqref{eq:hardy_z} and the $H^2$-stability of the spline interpolation operator.
Let $\CI_h \phi \in V_h$ be the interpolation of $\phi$. Since $\CI_h \phi - \phi_h \in V_h$, the inverse inequality is applicable. Moreover, by Assumption~\ref{ass:phi} (P2) and the standard interpolation estimate,
\[
\|\phi - \CI_h \phi\|_{H^2(\Omega_h)} \le C h^{k-1}\|\phi\|_{H^{k+1}(\TOmega)}
\qquad \text{for } k\ge 2.
\]
Therefore,
\begin{align}\label{H2-approx}
\| \phi - \phi_h \|_{H^2(\Omega_h)}
&\le \| \phi - \CI_h\phi \|_{H^2(\Omega_h)}
+ \| \CI_h \phi - \phi_h \|_{H^2(\Omega_h)}\notag\\
&\le h^{k-1}\| \phi \|_{H^{k+1}(\TOmega)} + h^{-1}\| \CI_h \phi - \phi_h \|_{H^1(\Omega_h)}\notag\\
&\le h^{k-1}\| \phi \|_{H^{k+1}(\TOmega)} 
+ h^{-1}\Big( \| \CI_h \phi - \phi \|_{H^1(\Omega_h)}
+ \| \phi - \phi_h \|_{H^1(\Omega_h)} \Big)\notag\\
&\le C h^{k-1},
\end{align}
where $C$ depends on $\|\phi\|_{H^{k+1}(\TOmega)}$. Substituting \eqref{H2-approx} into \eqref{eq:T1_final_H2_temp}, we arrive at
\begin{align}\label{T1-H2-final}
    \|T_1\|_{H^2(\Omega_h)} \le C h^{k-1} \|v\|_{H^{3}(\TOmega)} .
\end{align}

\textbf{Estimate for $T_2$:}
For $T_2 = \phi(z - z_h)$, we use the Leibniz rule to estimate the $H^m$-norm for $m \in \{1,2\}$ in a unified way. Since $\phi \in C^{k+1}(\hat{\Omega})$, its derivatives are uniformly bounded, and hence
\begin{equation}\label{eq:T2_bound_simplified}
\|\phi (z-z_h)\|_{H^m(T)}
\le C \Big( \|\phi\|_{L^\infty(T)} \|z-z_h\|_{H^m(T)} + \|z-z_h\|_{H^{m-1}(T)} \Big).
\end{equation}
The second term on the right-hand side can be handled directly. 
Indeed, by the standard local approximation property of the
PHT-spline interpolation operator, we have
\begin{align*}
\| z-z_h \|_{H^{m-1}(T)}
&\le Ch^{k-m+1} \| z \|_{H^k(\omega_T)}.
\end{align*} 
Using the uniformly bounded overlap of the patches $\omega_T$ and
the Hardy-type inequality \eqref{eq:hardy_z}, we obtain
\begin{align}\label{lagrange-inter-low}
\sum_{T\in\mathcal T_h}
\| z-z_h \|_{H^{m-1}(T)}^2
\le
Ch^{2(k-m+1)}
\|z\|_{H^k(\widetilde\Omega_h)}^2
\le
Ch^{2(k-m+1)}
\|v\|_{H^{k+1}(\widetilde\Omega_h)}^2.
\end{align} 
The first term, however, is more delicate. 
A direct application of the interpolation estimate would involve the quantity $\|z\|_{H^{k+1}(\TOmega)}$, and using the Hardy-type inequality in a straightforward manner would then require higher regularity of $v$ than desired. To avoid this loss of derivatives, we proceed with a more refined argument. 
Following the approach in \cite[Lemma~6]{phifem_stokes} and \cite[Lemma~3.5]{HuLiYang}, we analyze \eqref{eq:T2_bound_simplified} by decomposing the mesh $\mathcal{T}_h$ into two disjoint subsets according to the distance of each element from the interface $\Gamma$. 
Here, $\omega_T$ denotes the element patch associated with the local interpolation on $T$.

\noindent \textbf{Case 1: Interior domain} $\{ T \in \mathcal{T}_h : \operatorname{dist}(\omega_T, \Gamma) \ge 3h \}$. \\
For any $T$ in this set, the level set function $\phi$ does not vanish on the patch $\omega_T$. Since $\phi$ is Lipschitz continuous and satisfies $\phi = 0$ on $\Gamma$, there exists a constant $c_0>0$ such that
\[
\min_{x \in \omega_T} |\phi(x)| \ge c_0 h.
\]
Moreover, the standard interpolation estimate on $T$ gives
\[
\|z-z_h\|_{H^l(T)} \le C h^{k+1-l} |z|_{H^{k+1}(\omega_T)},
\qquad l \le m.
\]
Applying \cite[(33)]{phifem_stokes} on the patch $\omega_T$, we obtain
$$
|z|_{H^{k+1}(\omega_T)} \le \frac{C}{\min_{\omega_T}|\phi|}\Big(|v|_{H^{k+1}(\omega_T)}+\|z\|_{H^k(\omega_T)}\Big) .
$$
To control the prefactor involving $\phi$, we note that
\[
\frac{\|\phi\|_{L^\infty(T)}}{\min_{\omega_T}|\phi|}
=1+\frac{\|\phi\|_{L^\infty(T)}-\min_{\omega_T}|\phi|}{\min_{\omega_T}|\phi|}
\le 1+\frac{C h\|\nabla\phi\|_{L^\infty(T)}}{c_0 h} \le \tilde{C},
\]
where we have used the Lipschitz continuity of $\phi$. Therefore, combining the above estimates, we arrive at
\begin{align}
\|\phi\|_{L^\infty(T)} \|z-z_h\|_{H^m(T)}
&\le C h^{k+1-m} \|\phi\|_{L^\infty(T)} \|z\|_{H^{k+1}(\omega_T)} \notag \\
&\le C  h^{k+1-m}
\Big(\|v\|_{H^{k+1}(\omega_T)}+\|z\|_{H^k(\omega_T)}\Big).
% \notag\\
% &\le C  h^{k+1-m}
% \|v\|_{H^{k+1}(\omega_T)},
\label{eq:interior_phi_e}
\end{align}
% where the last inequality follows from the Hardy-type inequality \eqref{eq:hardy_z}.

\noindent \textbf{Case 2: Strip domain} $\{ T \in \mathcal{T}_h : \dist(\omega_T, \Gamma) < 3h \}$. \\
For any $T$ in this set, the patch $\omega_T$ lies within a strip of width $\mathcal{O}(h)$ around the interface $\Gamma$. Since $\phi = 0$ on $\Gamma$ and $\|\nabla \phi\|_{L^\infty(\hat{\Omega})}$ is bounded, it follows that
\(
\|\phi\|_{L^\infty(T)} \lesssim h.
\)
This provides an additional factor of $h$, and therefore the standard approximation estimate for the PHT-spline interpolation is sufficient to control the term $\|\phi\|_{L^\infty(T)} \|z-z_h\|_{H^m(T)}$ using only $\|z\|_{H^k(\omega_T)}$. Consequently,
% \begin{align}
% \|\phi\|_{L^\infty(T)} \|z-z_h\|_{H^m(T)}
% &\le C  h \cdot h^{k-m} \|z\|_{H^k(\omega_T)} \le C h^{k+1-m}
% \|v\|_{H^{k+1}(\omega_T)}.
% \label{eq:strip_phi_e}
% \end{align}
\begin{align}
\|\phi\|_{L^\infty(T)} \|z-z_h\|_{H^m(T)}
&\le C  h \cdot h^{k-m} \|z\|_{H^k(\omega_T)}.
% \le C h^{k+1-m} \|v\|_{H^{k+1}(\omega_T)}.
\label{eq:strip_phi_e}
\end{align}

By summing the estimates \eqref{lagrange-inter-low}--\eqref{eq:strip_phi_e} over all elements $T \in \mathcal{T}_h$, for $m \in \{1,2\}$, 
and then applying the Hardy-type inequality to $z=v/\phi$ on $\TOmega$,
we obtain the optimal estimates 
\[
\|T_2\|_{H^1(\Omega_h)} \le C h^k \|v\|_{H^{k+1}(\TOmega)},
\qquad
\|T_2\|_{H^2(\Omega_h)} \le C h^{k-1} \|v\|_{H^{k+1}(\TOmega)}.
\]
Finally, combining the above bounds for $T_2$ with those already derived for $T_1$, we conclude the proof of \eqref{eq:interp_H1_H2}.
\hfill\end{proof}

By arguing as in the proof of Lemma~\ref{lem:interpolation},
using the local approximation property of the PHT-spline interpolation operator on each element and standard inverse inequalities, 
one obtains the corresponding interpolation estimates.
Passing from these element-wise estimates to the global bound in the broken Sobolev norm is then justified by the uniformly bounded overlap of the patch family $\{\omega_T\}_{T\in\mathcal T_h}$ and the
Hardy-type inequality.
Therefore, the following corollary is immediate, and its proof is omitted.

\begin{corollary}\label{cor:interp_Hj}
Let $3\le r\le k+1$ be an integer and assume that
$v \in H^{k+1}(\TOmega)$ with $ v|_{\Omega} \in H_0^1(\Omega)$.
Under the geometric and mesh assumptions of
Lemma~\ref{lem:interpolation}, 
for $1\le j\le \min\{k,r\}$,
\begin{equation}\label{interpolation_Hj}
\|v-\widetilde{\mathcal I}_h v\|_{H_h^j(\Omega_h)}
\le Ch^{r-j}\|v\|_{H^r(\widetilde\Omega_h)},
\end{equation}
where the broken Sobolev norm is defined by
\begin{equation}\label{eq:def_H_h^j}
\|w\|_{H_h^j(\Omega_h)}^2:=\sum_{T \in \CT_h}\|w\|_{H^j(T)}^2.
\end{equation} 
\end{corollary}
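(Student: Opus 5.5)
We follow the proof of Lemma~\ref{lem:interpolation} verbatim, working element by element in the broken norm \eqref{eq:def_H_h^j}. Throughout we write $z=v/\phi$ and $z_h=\mathcal I_h z$. We split the error as
\[
v-\widetilde{\mathcal I}_h v=(\phi-\phi_h)z_h+\phi(z-z_h)=:T_1+T_2 .
\]
Each term is estimated in $H^j(T)$ for every $T\in\CT_h$, and the squares are summed at the end. The Hardy-type inequality \eqref{eq:hardy_z}, applied with $r$ in place of $k+1$, gives $\|z\|_{H^{r-1}(\TOmega)}\le C\|v\|_{H^r(\TOmega)}$. This replaces $\|z\|_{H^k}$ by $\|z\|_{H^{r-1}}$ everywhere.

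\textbf{Estimate for $T_1$.} On each $T$ we apply the Leibniz rule to $D^\alpha[(\phi-\phi_h)z_h]$ with $|\alpha|\le j$. Each derivative of $\phi-\phi_h$ of order $i\le j$ is bounded elementwise in $L^\infty$ by $Ch^{k+1-i}$. For $i\le 2$ this is exactly \eqref{approx_phi_max}. For $i>2$ we insert $\mathcal I_h\phi$ and use the inverse inequality on $\mathcal I_h\phi-\phi_h\in V_h$, as in \eqref{H2-approx}. The factor $|z_h|_{H^{j-i}(T)}$ is controlled by the local stability of the interpolant and inverse estimates. This yields
\[
\|T_1\|_{H^j(T)}\le C\sum_{i=0}^{j}h^{k+1-i}\,h^{\min(0,\,r-1-(j-i))}\|z\|_{H^{\min(j-i,\,r-1)}(\omega_T)}.
\]
Since $r\le k+1$, every power of $h$ here is at least $r-j$. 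For the terms with $j-i>r-1$, which require an inverse estimate, the exponent is $k+1-i+r-1-j+i=k+r-j\ge r-j$. Summing over $T$ with bounded overlap of the patches $\omega_T$ and applying Hardy gives $\|T_1\|_{H^j_h}\le Ch^{r-j}\|v\|_{H^r(\TOmega)}$.

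\textbf{Estimate for $T_2$.} By Leibniz and the boundedness of the derivatives of $\phi$,
\[
\|\phi(z-z_h)\|_{H^j(T)}\le C\big(\|\phi\|_{L^\infty(T)}\|z-z_h\|_{H^j(T)}+\|z-z_h\|_{H^{j-1}(T)}\big).
\]
The lower-order term is bounded by $Ch^{r-j}\|z\|_{H^{r-1}(\omega_T)}$; this is valid since $j-1\le r-1\le k$.

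The leading term is handled by the same two cases as in Lemma~\ref{lem:interpolation}. In the strip case, $\operatorname{dist}(\omega_T,\Gamma)<3h$, so $\|\phi\|_{L^\infty(T)}\lesssim h$. Combined with $\|z-z_h\|_{H^j(T)}\le Ch^{r-1-j}\|z\|_{H^{r-1}(\omega_T)}$, this gives the rate $h^{r-j}$. The interpolation estimate here needs $j\le r-1$. When $j=r\le k$, the bound $\|z-z_h\|_{H^j(T)}\le C\|z\|_{H^{j}(\omega_T)}$ is not directly available with $r-1$ derivatives. We would then instead use the $j=r-1$ case together with an inverse estimate on $T_2-(\text{its }\mathcal I_h\text{-image})$. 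In the interior case we use $|z|_{H^r(\omega_T)}\le C(\min_{\omega_T}|\phi|)^{-1}(|v|_{H^r(\omega_T)}+\|z\|_{H^{r-1}(\omega_T)})$ from \cite[(33)]{phifem_stokes}. We also use the bound $\|\phi\|_{L^\infty(T)}/\min_{\omega_T}|\phi|\le\tilde C$.

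\textbf{Main obstacle.} The delicate point is the endpoint $j=\min\{k,r\}$ with $j=r$ in the strip and in the $T_1$ terms. There only $r-1$ derivatives of $z$ are available, so the usual interpolation bound fails. The plan is to avoid it by reducing to the $j-1$ estimate plus a local inverse inequality. The key fact making this legitimate is that $\phi_h z_h$ is piecewise polynomial and $\phi$ is smooth. The inverse inequality must be applied to a discrete quantity, not to $v$ itself.
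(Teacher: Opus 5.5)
Your route is the one the paper intends. The paper omits the proof and only says to repeat the argument of Lemma~\ref{lem:interpolation} elementwise, using local approximation, inverse inequalities, bounded overlap of the patches $\omega_T$ and the Hardy inequality. That is exactly your outline. The following parts go through as written: your $T_1$ bound (the $L^\infty$ bounds on $D^i(\phi-\phi_h)$, plus an inverse estimate on $z_h$ when $j-i=r$), the lower-order part of $T_2$, the leading part of $T_2$ for $j\le r-1$, and the interior case for $j=r$.

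The step that fails as written is the endpoint $j=r\le k$ for $T_2$ in the strip. This is precisely the case $j=r=3$ used later for the dual solution.

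\begin{itemize}
\item Once you bound $\|\phi(z-z_h)\|_{H^r(T)}$ by $\|\phi\|_{L^\infty(T)}\|z-z_h\|_{H^r(T)}$, you need $|z|_{H^r}$. Near $\Gamma$ this is not controlled by $\|v\|_{H^r}$, since Hardy only gives $\|z\|_{H^{r-1}}$.
\item Your proposed repair, an inverse estimate on $T_2$ minus its $\mathcal I_h$-image, is not legitimate. Neither $T_2=\phi(z-z_h)$ nor $T_2-\mathcal I_hT_2$ is piecewise polynomial.
\end{itemize}

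The missing idea is to subtract a local polynomial approximation of $v$ before applying the inverse inequality. Let $q_T\in\mathbb P_{r-1}$ be the averaged Taylor polynomial of $v$ on $T$, so that $|v-q_T|_{H^m(T)}\le Ch^{r-m}|v|_{H^r(T)}$ for $0\le m\le r$. Since $(\phi_hz_h-q_T)|_T$ is a polynomial of bounded degree, the inverse inequality gives
\[
|v-\phi_hz_h|_{H^r(T)}\le |v|_{H^r(T)}+Ch^{-1}|\phi_hz_h-q_T|_{H^{r-1}(T)}
\le C|v|_{H^r(T)}+Ch^{-1}|v-\phi_hz_h|_{H^{r-1}(T)}.
\]
Summing over $T$ and invoking your $j=r-1$ estimate, which is valid as you wrote it, gives the case $j=r$. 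This also makes the $T_1/T_2$ split unnecessary at $j=r$.

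Alternatively, keep $\phi$ inside the norm. For $|\alpha|=r$, the identity
\[
\phi D^\alpha z=D^\alpha v-\sum_{0<\beta\le\alpha}\binom{\alpha}{\beta}D^\beta\phi\,D^{\alpha-\beta}z
\]
gives $\|\phi D^\alpha z\|_{L^2(T)}\le C(\|v\|_{H^r(T)}+\|z\|_{H^{r-1}(T)})$. In the strip, $\|\phi D^\alpha z_h\|_{L^2(T)}\le C\|z_h\|_{H^{r-1}(T)}$, because $\|\phi\|_{L^\infty(T)}\le Ch$ and the inverse inequality applies to the polynomial $z_h|_T$.
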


\subsection{Stability and well-posedness}
\label{sec:main_stability}

We now establish the well-posedness of the discrete scheme \eqref{scheme}. To this end, we introduce the bilinear form $a_h(\cdot,\cdot)$ and the linear functional $L_h(\cdot)$ on $S_h$ by
\[
a_h(u_h,v_h)
:= (\Delta u_h,\Delta v_h)_{L^2(\Omega_h)} + j_h(u_h,v_h),
\qquad
L_h(v_h):=-(f,\Delta v_h)_{L^2(\Omega_h)}.
\]
By \eqref{stability_equ}, the bilinear form $a_h(\cdot,\cdot)$ is symmetric and positive definite on $S_h$. 
{ It therefore defines an inner product on $S_h$, with the induced energy norm given by
\[
\TN{v_h}^2
:= a_h(v_h,v_h)
= \|\Delta v_h\|_{L^2(\Omega_h)}^2 + j_h(v_h,v_h).
\] }
% Since $a_h(\cdot,\cdot)$ is symmetric and positive definite on $S_h$, the quantity
% \[
% \TN{v_h}^2 := a_h(v_h,v_h)
% \]
% defines a norm on $S_h$. 
Since $S_h$ is finite-dimensional, it follows that $(S_h,\TN{\cdot})$ is a Hilbert space. 
On the other hand, when extended to the larger space $H^2(\Omega_h)$, the quantity $\TN{\cdot}$ is in general only a semi-norm. 
Moreover, the linear functional $L_h(\cdot)$ is continuous on $S_h$, since, by the Cauchy--Schwarz inequality,
\[
|L_h(v_h)|
\le \|f\|_{L^2(\Omega_h)}\,\|\Delta v_h\|_{L^2(\Omega_h)}
\le \|f\|_{L^2(\Omega_h)}\,\TN{v_h}.
\]
Therefore, by the Riesz representation theorem, the variational problem \eqref{scheme} admits a unique solution $u_h\in S_h$.

\subsection{$H^1$-error estimate}
\label{sec:main_H1_error}
Before presenting the detailed proof of Theorem~\ref{Thm_H1}, we briefly outline the main strategy. The proof proceeds in three main steps: first, we establish an error estimate in the energy norm; next, we derive the $H^2$-error estimate; and finally, we recover the optimal $H^1$-error bound via a duality argument.

\begin{proof}
{ Recall that $\Tu$ is an extension of $u$ satisfying $ \| \Tu \|_{H^{k+1}(\TOmega)}
\leq C\|u\|_{H^{k+1}(\Omega)}$ and $\Omega \subset \Omega_h$. 
Combining this extension estimate with the elliptic regularity for the source problem $-\Delta u = f$ in $\Omega$, we obtain the following bound:
\begin{equation}\label{equ:u_f}
\| \Tu \|_{H^{k+1}(\TOmega)}
\leq C\|u\|_{H^{k+1}(\Omega)} 
\leq C\|f\|_{H^{k-1}(\Omega)}
\leq C\|f\|_{H^{k-1}(\Omega_h)}. 
\end{equation}
}

We define the residual $r := -\Delta \Tu - f$ in the extended domain $\Omega_h$. Since $\Tu = u$ in $\Omega$, it follows that the residual vanishes identically in $\Omega$. 
Furthermore, because $\Tu \in H^{k+1}(\Omega_h)$ and $f \in H^{k-1}(\Omega_h)$, we have $r \in H^{k-1}(\Omega_h)$. Therefore, by applying Lemma \ref{lem:vanish} to the residual $r$ and invoking the estimate \eqref{equ:u_f}, we obtain
\begin{align}
\| r\|_{L^2(\Omega_h)}
= \| r\|_{L^2(\Omega_h\setminus \Omega)}
&\leq C h^{k-1} \| r\|_{H^{k-1}(\Omega_h\setminus \Omega)} \notag \\
&\leq C h^{k-1} \Big(\| \Delta \Tu \|_{H^{k-1}(\Omega_h\setminus \Omega)} + \| f\|_{H^{k-1}(\Omega_h\setminus \Omega)} \Big) \notag \\
&\leq C h^{k-1} \Big(\| \Tu \|_{H^{k+1}(\Omega_h)} + \| f \|_{H^{k-1}(\Omega_h)} \Big) \notag \\
&\leq C h^{k-1} \| f \|_{H^{k-1}(\Omega_h)} . \label{equ:r_f}
\end{align}

Let $\TCI_h\Tu \in S_h$ be the spline interpolant defined in Lemma \ref{lem:interpolation}. 
We decompose the finite element error $e_h := \Tu - u_h$ into the interpolation error $\rho := \Tu - \TCI_h\Tu$ and the discrete error $\xi_h = \TCI_h\Tu - u_h \in S_h$:
\begin{equation}\label{decomp}
e_h = (\Tu - \TCI_h\Tu) + (\TCI_h\Tu - u_h) = \rho + \xi_h.
\end{equation}
We now derive the error equation for the discrete component $\xi_h$. 
From the definition of the bilinear form $a_h(\cdot,\cdot)$ and the discrete scheme \eqref{scheme}, for any test function $v_h \in S_h$, we have:
\begin{align}\label{discrete_residual}
a_h(\xi_h, v_h) 
&= a_h(\TCI_h\Tu, v_h) - a_h(u_h, v_h) \notag \\
&= (\Delta \TCI_h\Tu, \Delta v_h)_{L^2(\Omega_h)} + j_h(\TCI_h\Tu, v_h) + (f, \Delta v_h)_{L^2(\Omega_h)}.
\end{align}
Recall the continuous identity $-\Delta \Tu = f + r$ in the extended domain $\Omega_h$, which yields $f = -\Delta \Tu - r$. Substituting this relation into \eqref{discrete_residual}, we obtain
\begin{align}
a_h(\xi_h, v_h) &= (\Delta \TCI_h\Tu - \Delta \Tu, \Delta v_h)_{L^2(\Omega_h)} - (r, \Delta v_h)_{L^2(\Omega_h)} + j_h(\TCI_h\Tu, v_h) \notag \\
&= -(\Delta \rho, \Delta v_h)_{L^2(\Omega_h)} - (r, \Delta v_h)_{L^2(\Omega_h)} + j_h(\TCI_h\Tu, v_h).
\label{error_equ_new}
\end{align}

Choosing $v_h = \xi_h \in S_h$ in \eqref{error_equ_new}, we have  
\begin{equation}\label{energy_theta}
\TN{\xi_h}^2 = -(\Delta \rho, \Delta \xi_h)_{L^2(\Omega_h)} - (r, \Delta \xi_h)_{L^2(\Omega_h)} + j_h(\TCI_h\Tu, \xi_h).
\end{equation}

We bound the three terms on the right-hand side using the Cauchy--Schwarz inequality and the definition of the energy norm:
\begin{align*}
|(\Delta \rho, \Delta \xi_h)_{L^2(\Omega_h)}| 
&\le \|\Delta \rho\|_{L^2(\Omega_h)} \|\Delta \xi_h\|_{L^2(\Omega_h)} 
\le \|\Delta \rho\|_{L^2(\Omega_h)} \TN{\xi_h}, \\
|(r, \Delta \xi_h)_{L^2(\Omega_h)}| 
&\le \|r\|_{L^2(\Omega_h)} \|\Delta \xi_h\|_{L^2(\Omega_h)} \le \|r\|_{L^2(\Omega_h)} \TN{\xi_h}, \\
|j_h(\TCI_h\Tu, \xi_h)| &\le j_h^{1/2}(\TCI_h\Tu, \TCI_h\Tu) j_h^{1/2}(\xi_h, \xi_h) \le j_h^{1/2}(\TCI_h\Tu, \TCI_h\Tu) \TN{\xi_h}.
\end{align*}
Substituting these bounds into \eqref{energy_theta} and dividing both sides by $\TN{\xi_h}$, we obtain the stability estimate for the discrete component:
\begin{equation}\label{xi_bound_final}
\TN{\xi_h} \le \|\Delta \rho\|_{L^2(\Omega_h)} + \|r\|_{L^2(\Omega_h)} + j_h^{1/2}(\TCI_h\Tu, \TCI_h\Tu).
\end{equation}

To bound the right-hand side of \eqref{xi_bound_final}, we estimate each term separately. 
First, by the standard approximation properties of the spline interpolant given in Lemma \ref{lem:interpolation}, the Laplacian of the interpolation error is bounded by
\begin{equation}\label{eq:bound_rho}
\|\Delta\rho\|_{L^2(\Omega_h)}^2 
\le C \|\rho\|_{H^2(\Omega_h)}^2 \le C h^{2k-2} \|\Tu\|_{H^{k+1}(\TOmega)}^2,
\end{equation}
and the residual term is known to satisfy the estimate $\|r\|_{L^2(\Omega_h)}^2 \le C h^{2k-2} \|\Tu\|_{H^{k+1}(\TOmega)}^2$.

We split the summation in $j_h(\TCI_h\Tu, \TCI_h\Tu)$ into low-order ($2 \le j \le k$) and high-order ($k+1 \le j \le 2k$) contributions. 
For the low-order jumps, since the extended exact solution $\Tu \in H^{k+1}(\TOmega)$, the traces of its normal derivatives are continuous across internal faces, meaning $\jump{\partial_n^j \Tu} \equiv 0$. Thus, we can insert $\Tu$ to obtain $\jump{\partial_n^j \TCI_h \Tu} = \jump{\partial_n^j (\TCI_h \Tu - \Tu)} = -\jump{\partial_n^j \rho}$. Applying the standard continuous scaled trace inequality and the approximation properties of $\rho$, we have
\begin{align}\label{eq:penalty_low}
\sum_{j=2}^k \sum_{F\in \mathcal{F}_h^\Gamma} h_F^{2j-3} \|\jump{\partial_n^j \TCI_h \Tu}\|_{L^2(F)}^2 
&= \sum_{j=2}^k \sum_{F\in \mathcal{F}_h^\Gamma} h_F^{2j-3} \|\jump{\partial_n^j \rho}\|_{L^2(F)}^2 \notag \\
&\le C \sum_{j=2}^k h^{2j-3} \Big( h^{-1} h^{2(k+1-j)} \Big) \|\Tu\|_{H^{k+1}(\TOmega)}^2 \notag \\
&\le C h^{2k-2} \|\Tu\|_{H^{k+1}(\TOmega)}^2.
\end{align}

For the high-order jumps, the exact solution lacks sufficient regularity but we can directly apply the discrete trace inequality followed by the inverse inequality to reduce the order of derivatives from $j$ down to $k+1$, 
\begin{align}\label{eq:penalty_high}
\sum_{j=k+1}^{2k} \sum_{F\in \mathcal{F}_h^\Gamma} h_F^{2j-3} \|\jump{\partial_n^j \TCI_h \Tu}\|_{L^2(F)}^2 
&\le C\sum_{j=k+1}^{2k} \sum_{F\in \mathcal{F}_h^\Gamma} h^{2j-3} \Big( h^{2(k+1-j)-1} \|\TCI_h \Tu\|_{H^{k+1}(K)}^2 \Big) \notag \\
% &\le C h^{2k-2} \|\TCI_h \Tu\|_{H^{k+1}(\Omega_h)}^2 \notag \\
&\le C h^{2k-2} \|\Tu\|_{H^{k+1}(\TOmega)}^2
\end{align}
where we use the $H^{k+1}$-stability of the spline interpolant $\|\TCI_h\Tu\|_{H^{k+1}_h(\Omega_h)} \le C \|\Tu\|_{H^{k+1}(\TOmega)}$. 

Combining \eqref{eq:penalty_low} and \eqref{eq:penalty_high}, the total penalty term satisfies 
\begin{equation}\label{eq:penalty_Ihu}
j_h(\TCI_h\Tu, \TCI_h\Tu) \le C h^{2k-2} \|\Tu\|_{H^{k+1}(\TOmega)}^2.
\end{equation}
Finally, substituting this bound along with \eqref{eq:bound_rho} into \eqref{xi_bound_final}, we achieve the optimal energy norm estimate for $\xi_h$:
\begin{equation}\label{Energy_norm_xi}
\TN{\xi_h}^2 
\le C h^{2k-2} \|\Tu\|_{H^{k+1}(\TOmega)}^2 
\le C h^{2k-2} \|f\|_{H^{k-1}(\Omega_h)}^2,
\end{equation}
where the last step follows from \eqref{equ:u_f}.

We exploit the error decomposition $e_h = \rho + \xi_h$ and apply the triangle inequality:
\begin{equation}\label{eq:triangle_H2}
\|e_h\|_{H^2(\Omega_h)} 
\le \|\rho\|_{H^2(\Omega_h)} + \|\xi_h\|_{H^2(\Omega_h)}
\le C h^{k-1} \|\Tu\|_{H^{k+1}(\TOmega)} + \|\xi_h\|_{H^2(\Omega_h)}
\end{equation}
with the interpolation error estimate \eqref{eq:interp_H1_H2}.

By Lemma~\ref{stability}, the discrete error component $\xi_h \in S_h$ satisfies
$$
\|\xi_h\|_{H^2(\Omega_h)}^2 
\le C \Big( \|\Delta \xi_h\|_{L^2(\Omega_h)}^2 + j_h(\xi_h, \xi_h) \Big) = C \TN{\xi_h}^2.
$$
Consequently, we obtain the final optimal $H^2$-norm error estimate:
\begin{equation}\label{H2_Error}
\|e_h\|_{H^2(\Omega_h)} 
\le C h^{k-1} \|\Tu\|_{H^{k+1}(\TOmega)} 
\le C h^{k-1} \|f\|_{H^{k-1}(\Omega_h)}.
\end{equation}

We now establish the optimal $H^1$-error estimate by employing a standard duality argument. Let $v$ be the solution to the dual problem:
\begin{equation}\label{eq:dual_problem}
\left\{
\begin{aligned}
-\Delta v &= -e_h &&\text{in } \Omega, \\
v &= 0 &&\text{on } \Gamma.
\end{aligned}
\right.
\end{equation}
Extending the dual solution $v$ from $\Omega$ to the extended domain $\TOmega$, 
elliptic regularity on the fixed domain $\Omega$, followed by boundedness of this extension, gives 
\begin{align}
\label{eq:v_H3_stability}
\| v \|_{H^3(\TOmega)} \leq C \| v \|_{H^3(\Omega)} \leq C \| e_h \|_{H^1(\Omega_h)}.
\end{align}

Testing the numerical error $e_h$ against the Laplacian of the dual solution $\Delta v$ over the physical domain $\Omega$, and noting that $\Delta v = e_h$ from \eqref{eq:dual_problem}, we deduce:
\begin{align*}
-\int_\Omega \Delta e_h e_h
&= -\int_\Omega \Delta e_h \Delta v \notag \\
&= -(\Delta e_h, \Delta v)_{L^2(\Omega_h)} + (\Delta e_h, \Delta v)_{L^2({\Omega_h \setminus \Omega})} \notag \\
&= -(\Delta e_h, \Delta v_h)_{L^2(\Omega_h)} - (\Delta e_h, \Delta (v - v_h))_{L^2(\Omega_h)} + (\Delta e_h, \Delta v)_{L^2({\Omega_h \setminus \Omega})} \notag \\
&= (r, \Delta v_h)_{L^2(\Omega_h)} - (\Delta e_h, \Delta (v - v_h))_{L^2(\Omega_h)} + (\Delta e_h, \Delta v)_{L^2({\Omega_h \setminus \Omega})} \notag \\
&\quad - \big(j_h(\TCI_h \Tu, v_h) - j_h(\xi_h, v_h) \big),
\end{align*}
where the last equality follows from the error equation \eqref{error_equ_new}, and $v_h := \TCI_h v \in S_h$ denotes the modified interpolant defined in~\eqref{eq:def_interpolant}.
By Corollary~\ref{cor:interp_Hj}, it satisfies
$\|v-v_h\|_{H_h^3(\Omega_h)}
  \le C\|v\|_{H^3(\widetilde\Omega_h)}.
$
Hence, by the triangle inequality,
\begin{equation}
  \|v_h\|_{H_h^3(\Omega_h)}
  \le C\|v\|_{H^3(\widetilde\Omega_h)}
  \le C\|e_h\|_{H^1(\Omega_h)},
\end{equation}
where the last inequality follows from \eqref{eq:v_H3_stability}.

Applying integration by parts to the left side $-\int_\Omega \Delta e_h e_h$, we obtain
\begin{align}
\| \nabla e_h \|^2_{L^2(\Omega)}
\leq 
&\underbrace{\left| (r, \Delta v_h)_{L^2(\Omega_h)} \right| }_{\RNum{1}}
+ \underbrace{ \left| (\Delta e_h, \Delta v)_{L^2({\Omega_h \setminus \Omega})} \right| }_{\RNum{2}} \notag \\
& + \underbrace{ \left| (\Delta e_h, \Delta (v - v_h))_{L^2(\Omega_h)} \right| }_{\RNum{3}}
+ \underbrace{ {\big| j_h(\TCI_h \Tu, v_h) \big| + \big| j_h(\xi_h, v_h) \big| }
}_{\RNum{4}} 
+ \underbrace{ \big| \int_\Gamma e_h \partial_n e_h \big| }_{\RNum{5}} . \notag
\end{align}
In what follows, we systematically estimate the terms $\RNum{1}$ through $\RNum{5}$.

As a first step, we establish an upper bound for the error on the boundary, denoted by $\|e_h\|_{L^2(\Gamma)}$. 
Note that the exact solution extension satisfies $\Tu = 0$ precisely on the continuous boundary $\Gamma$. Thus, the error on the interface simplifies to $e_h = \Tu - u_h = -u_h$. By introducing the auxiliary variables $w = \Tu/\phi$ and $w_h = u_h/\phi_h$, we can rewrite the error on $\Gamma$ as $e_h = (\phi - \phi_h)w_h$. Splitting this expression yields:
\begin{align*}
\|e_h \|_{L^2(\Gamma)} 
&= \| (\phi_h - \phi) \big[ (w_h - w) + w \big] \|_{L^2(\Gamma)} \\
&\le \| \phi_h - \phi \|_{L^2(\Gamma)} \| w \|_{L^\infty(\Gamma)} 
+ \| \phi_h - \phi \|_{L^\infty(\Gamma)} \| w_h - w \|_{L^2(\Gamma)} .
\end{align*}
By virtue of the Hardy inequality and the Sobolev embedding theorem for dimension $d \in \{2,3\}$, the $L^\infty$-norm of $w$ is uniformly bounded by the regularity of $\Tu$:
\begin{equation} \label{eq:w_Linf_bound}
\| w \|_{L^\infty(\Omega_h)} \le C \| \Tu \|_{W^{1,\infty}(\TOmega)} \le C \| \Tu \|_{H^{k+1}(\TOmega)}.
\end{equation}
Applying the scaled trace inequality, the $L^2(\Gamma)$-norms can be controlled by:
\begin{align}
\| w_h - w \|_{L^2(\Gamma)} &\le C \left( h^{-1/2} \| w_h - w \|_{L^2(\Omega_h)} + h^{1/2} \| w_h - w \|_{H^1(\Omega_h)} \right), \label{eq:trace_wh} \\
\| \phi_h - \phi \|_{L^2(\Gamma)} &\le C \left( h^{-1/2} \| \phi_h - \phi \|_{L^2(\Omega_h)} + h^{1/2} \| \phi_h - \phi \|_{H^1(\Omega_h)} \right) \le C h^{k+1/2}. \label{eq:trace_phi}
\end{align}
To estimate the geometric error in the $L^\infty(\Gamma)$-norm, combining the inverse inequality with standard Sobolev embeddings for $d \le 3$ yields:
\begin{align}
\|\phi_h - \phi\|_{L^\infty(\Gamma)} 
&\le \|\phi_h - \CI_h \phi \|_{L^\infty(\Omega_h)} + \|\phi - \CI_h \phi \|_{L^\infty(\Omega_h)} \notag \\
&\le C h^{-1/2} \| \phi_h - \CI_h \phi \|_{L^6(\Omega_h)} + C h^{k-1/2} \notag \\
&\le C h^{-1/2} \| \phi_h - \CI_h \phi \|_{H^1(\Omega_h)} + C h^{k-1/2} 
\le C h^{k-1/2}. \label{eq:phi_Linf}
\end{align}
Applying the Hardy inequality on $\Omega_h$ to $\phi_h(w_h - w)$ and substituting the identity $\phi_h(w_h - w) = u_h - \phi_h w = -e_h + (\phi - \phi_h)w$, we deduce:
\begin{align}
\| w_h - w \|_{L^2(\Omega_h)} &\le C \| \phi_h(w_h - w) \|_{H^1(\Omega_h)}, \notag \\
&\le \| e_h \|_{H^1(\Omega_h)} + \| (\phi - \phi_h)w \|_{H^1(\Omega_h)} \notag \\
&\le \| e_h \|_{H^1(\Omega_h)} + \| \phi - \phi_h \|_{H^1(\Omega_h)} \| w \|_{L^\infty(\Omega_h)} + \| \phi - \phi_h \|_{L^\infty(\Omega_h)} \| w \|_{H^1(\Omega_h)} \notag \\
&\le \| e_h \|_{H^1(\Omega_h)} + C h^{k-1/2}\| \Tu \|_{H^{k+1}(\TOmega)} 
\label{eq:prod_H1} \\
\| w_h - w \|_{H^1(\Omega_h)} 
&\le C \| \phi_h(w_h - w) \|_{H^2(\Omega_h)} \notag \\
&\le \| e_h \|_{H^2(\Omega_h)} 
+ \| (\phi - \phi_h) w \|_{H^2(\Omega_h)} \notag \\
&\le \| e_h \|_{H^2(\Omega_h)} 
+ \| \phi - \phi_h\|_{H^2(\Omega_h)} 
\|w \|_{W^{1,\infty}(\Omega_h)} \notag\\
& \quad+\| \phi - \phi_h\|_{L^\infty(\Omega_h)} 
\|w \|_{H^2(\Omega_h)} \notag \\
&\le \| e_h \|_{H^2(\Omega_h)} 
+ C h^{k-1} 
\|\Tu \|_{H^{k+1}(\TOmega)} 
\label{eq:prod_H2}
\end{align}
where the last inequality uses the estimate from \eqref{H2-approx}.
Substituting these bounds back into the trace inequality \eqref{eq:trace_wh} yields:
\begin{align}
\| w_h - w \|_{L^2(\Gamma)} 
&\le C h^{-1/2} \Big( \| e_h \|_{H^1(\Omega_h)} + C h^{k-1/2}\| \Tu \|_{H^{k+1}(\TOmega)} \Big) \notag \\
&\quad + C h^{1/2} \Big( \| e_h \|_{H^2(\Omega_h)} 
+ C h^{k-1} \|\Tu \|_{H^{k+1}(\TOmega)} \Big)\notag
\\
&\le C \Big( h^{-1/2} \| e_h \|_{H^1(\Omega_h)} 
+ h^{1/2} \| e_h \|_{H^2(\Omega_h)}
+ h^{k-1}\| \Tu \|_{H^{k+1}(\TOmega)} \Big) 
\label{eq:w_h-w_Gamma}.
\end{align}
Consequently, returning to the initial splitting of $\|e_h\|_{L^2(\Gamma)}$ and expanding the terms with \eqref{eq:w_h-w_Gamma}, we obtain:
\begin{align}
\| e_h \|_{L^2(\Gamma)}
&\le \| \phi_h-\phi \|_{L^2(\Gamma)} \|w\|_{L^\infty(\Gamma)} 
+ \| \phi_h-\phi \|_{L^\infty(\Gamma)} \|w_h - w\|_{L^2(\Gamma)} \notag \\
&\le C h^{k+1/2}\|\Tu\|_{H^{k+1}(\TOmega)} \notag \\
&\quad + C h^{k-1/2} \Big( h^{-1/2} \| e_h \|_{H^1(\Omega_h)} + h^{1/2} \| e_h \|_{H^2(\Omega_h)} + h^{k-1}\| \Tu \|_{H^{k+1}(\TOmega)} \Big) \notag \\
&\le C h^{k+1/2}\|\Tu\|_{H^{k+1}(\TOmega)} + C h^{k} \| e_h \|_{H^2(\Omega_h)} + C h^{k-1} \| e_h \|_{H^1(\Omega_h)} . \label{eq:eh_Gamma_expanded}
\end{align}
For polynomial degree $k \ge 2$, the explicitly known high-order term $h^{2k-3/2}\|\Tu\|_{H^{k+1}}$ is asymptotically dominated by the leading-order error $h^{k+1/2}\|\Tu\|_{H^{k+1}}$ (since $2k-3/2 \ge k+1/2$) and can thus be absorbed. Furthermore, by invoking the previously established $H^2$-error bound \eqref{H2_Error}, the term involving the $H^2$-error can be controlled as $h^k \|e_h\|_{H^2(\Omega_h)} \le  C h^{2k-1} \|f\|_{H^{k-1}(\Omega_h)} $. Since $2k-1 \ge k+1/2$ for $k \ge 2$, this term is also bounded by the leading-order term and subsequently absorbed. Finally, applying the standard elliptic regularity $\|\Tu\|_{H^{k+1}(\TOmega)} \le C \|f\|_{H^{k-1}(\Omega_h)}$, the remaining unabsorbed terms yield the fundamental boundary estimate:
\begin{equation}
\| e_h \|_{L^2(\Gamma)} \le C h^{k+1/2}\|f\|_{H^{k-1}(\Omega_h)} + C h^{k-1} \| e_h \|_{H^1(\Omega_h)}.
\label{eq:eh_on_Gamma}
\end{equation}

By \cite[Lemma 3.1]{Lozinski2019}, 
for any sufficiently smooth function $\zeta \in H^1(\Omega_h \setminus \Omega)$,
\begin{equation}\label{eq:strip_taylor}
\| \zeta \|_{L^2(\Omega_h \setminus \Omega)} \le C \Big( h^{1/2} \| \zeta \|_{L^2(\Gamma)} + h \| \nabla \zeta \|_{L^2(\Omega_h \setminus \Omega)} \Big).
\end{equation}
Applying \eqref{eq:strip_taylor} to $\zeta = \Delta v$, we note that the dual problem \eqref{eq:dual_problem} ensures $-\Delta v = -e_h$ up to the continuous boundary $\Gamma$, yielding the exact trace identity $\Delta v = e_h$ on $\Gamma$. Furthermore, the gradient of the Laplacian is controlled by the $H^3$-regularity of the dual extension, such that $\| \nabla (\Delta v) \|_{L^2(\Omega_h \setminus \Omega)} \le C\| v \|_{H^3(\TOmega)} 
\le C\| e_h \|_{H^1(\Omega_h)}$. Consequently, we obtain
\begin{align}
\| \Delta v \|_{L^2(\Omega_h \setminus \Omega)} 
&\le C \Big( h^{1/2} \| \Delta v \|_{L^2(\Gamma)} + h \| \nabla (\Delta v) \|_{L^2(\Omega_h \setminus \Omega)} \Big) \notag \\
&\le C h^{1/2} \| e_h \|_{L^2(\Gamma)} + C h \| v \|_{H^3(\TOmega)} \notag \\
&\le C h^{k+1} \|f\|_{H^{k-1}(\Omega_h)} + C h \| e_h \|_{H^1(\Omega_h)}, \label{eq:Delta_v_strip}
\end{align}
where, assuming $k \ge 2$, the higher-order term $h^{k-1/2} \| e_h \|_{H^1(\Omega_h)}$ is asymptotically dominated and hence safely absorbed by the leading-order term $h \| e_h \|_{H^1(\Omega_h)}$.
Utilizing this estimate, the term $\RNum{2}$ can be bounded as follows:
\begin{align}
\RNum{2}
&\le \| e_h \|_{H^2(\Omega_h)} \Big( C h^{k+1} \|f\|_{H^{k-1}(\Omega_h)} + C h \| e_h \|_{H^1(\Omega_h)} \Big) \notag \\
&\le C h^{2k} \| f \|^2_{H^{k-1}(\Omega_h)} + C h^k \| f \|_{H^{k-1}(\Omega_h)} \| e_h \|_{H^1(\Omega_h)} . \label{eq:bound_H1_T2}
\end{align}

For the first term $\RNum{1}$, we bound the $L^2$-norm of $\Delta v_h$ on the narrow band $\Omega_h \setminus \Omega$ by applying the triangle inequality and the approximation property of the 
PHT-spline interpolation:
% Lagrange interpolation:
\begin{align*}
\| \Delta v_h \|_{L^2(\Omega_h \setminus \Omega)} 
&\le \| \Delta v \|_{L^2(\Omega_h \setminus \Omega)} + \| \Delta(v_h - v) \|_{L^2(\Omega_h \setminus \Omega)} \\
&\le \| \Delta v \|_{L^2(\Omega_h \setminus \Omega)} 
+ C h \| v \|_{H^3(\TOmega)} \\
&\le C h^{k+1} \|f\|_{H^{k-1}(\Omega_h)} + C h \| e_h \|_{H^1(\Omega_h)} .
\end{align*}
Noting that the residual $r$ vanishes identically in the physical domain $\Omega$, a direct application of \eqref{equ:r_f} alongside the Cauchy-Schwarz inequality yields the bound for $\RNum{1}$:
\begin{align}
\RNum{1}
= \left| (r, \Delta v_h)_{L^2(\Omega_h)} \right|
&\le \| r \|_{L^2(\Omega_h \setminus \Omega)} \| \Delta v_h \|_{L^2(\Omega_h \setminus \Omega)} \notag \\
&\le C h^{2k} \| f \|^2_{H^{k-1}(\Omega_h )} + C h^k \| f \|_{H^{k-1}(\Omega_h )} \| e_h \|_{H^1(\Omega_h)}. \label{eq:bound_H1_T1}
\end{align}

By virtue of the interpolation estimate in Lemma~\ref{lem:interpolation} and the established $H^2$-error bound \eqref{H2_Error}, the third term $\RNum{3}$ is controlled as follows:
\begin{align}
\RNum{3}
&= \left| (\Delta e_h, \Delta (v - v_h))_{L^2(\Omega_h)} \right| \notag \\
&\leq C h \|v\|_{H^3(\TOmega)}
\| e_h \|_{H^2(\Omega_h)}  
\leq C h^k \| f \|_{H^{k-1}(\Omega_h)}
\| e_h \|_{H^1(\Omega_h)}. \label{eq:bound_H1_T3}
\end{align}

To estimate the fourth term $\RNum{4}$, we decompose the jump penalty on the interface faces $F \in \mathcal{F}_h^\Gamma$ into high-order ($3 \le j \le 2k$) and low-order ($j = 2$) contributions. For the high-order jumps, successive applications of the discrete scaled trace and inverse inequalities give:
\begin{align}
\| \jump{\partial_n^j v_h} \|_{L^2(F)}
&\leq C h^{-1/2} \| \partial_n^j v_h \|_{L^2(K)} \notag \\
&\leq C h^{5/2 - j} \| \partial_n^3 v_h \|_{L^2(K)} 
\leq C h^{5/2 - j} \| v_h \|_{H^3(K)}. 
\label{eq:jump_trace_inverse_mid}
\end{align}
Summing this estimate over all relevant faces for $j \ge 3$, we deduce:
\begin{align*}
\sum_{j=3}^{2k} \sum_{F\in \mathcal{F}_h^\Gamma} h^{2j-3}\| \jump{\partial_n^j v_h} \|^2_{L^2(F)}
\leq C \sum_{j=3}^{2k} \sum_{K\in \CT_h^\Gamma} h^{2j-3} h^{5 - 2j}\| v_h\|^2_{H^3(K)} 
\leq C h^2 \| v_h\|^2_{H^3_h(\Omega_h)} .
\end{align*}
For the low-order jump $j = 2$, the assumed regularity $v \in H^3(\TOmega)$ ensures that the exact normal derivative is continuous across the faces, i.e., $\jump[F]{\partial^2_n v} = 0$. Applying the continuous scaled trace inequality to the error function $\partial_n^2 (v_h - v) \in H^1(K)$ in conjunction with the interpolation property \eqref{interpolation_Hj}, we obtain:
\begin{align*}
\sum_{F\in \mathcal{F}_h^\Gamma} h \| \jump{\partial_n^2 v_h} \|^2_{L^2(F)}
&= \sum_{F\in \mathcal{F}_h^\Gamma} h \| \jump{\partial_n^2 (v_h-v)} \|^2_{L^2(F)} \\
&\leq C \sum_{K\in \CT_h^\Gamma} \Big( \| \partial_n^2 (v_h-v) \|^2_{L^2(K)} + h^2 \| \nabla \partial_n^2 (v_h-v) \|^2_{L^2(K)} \Big) \notag \\
&\leq C h^2 \| v \|^2_{H^{3}(\TOmega)}.
\end{align*}
Combining these estimates for both low- and high-order jump contributions, we arrive at the unified bound for the stabilization penalty associated with the dual interpolant $v_h$:
\begin{align}\label{eq:penalty_vh_bound}
j_h(v_h,v_h)
&= \sum_{j=2}^{2k} \sum_{F\in \mathcal{F}_h^\Gamma} h_F^{2j-3} \| \jump{\partial_n^j v_h} \|^2_{L^2(F)} \notag \\ 
&\le C h^2 \| v \|^2_{H^3(\TOmega)}
+ C h^2 \| v_h \|^2_{H^3_h(\Omega_h)} 
\le C h^2 \| e_h \|^2_{H^1(\Omega_h)}.
\end{align}

To bound the fourth term $\RNum{4}$, we apply the Cauchy-Schwarz inequality to the stabilization bilinear form. Invoking the jump penalty estimate $j_h^{1/2}(\TCI_h\Tu, \TCI_h\Tu)$ from \eqref{eq:penalty_Ihu} alongside the energy norm bound for $\xi_h$ from \eqref{Energy_norm_xi}, we deduce:
\begin{align}
\RNum{4} 
\leq \Big( j_h^{1/2}(\TCI_h\Tu, \TCI_h\Tu)
+ j_h^{1/2}(\xi_h, \xi_h) \Big)
j_h^{1/2}(v_h, v_h) 
\le C h^k \| f \|_{H^{k-1}(\Omega_h)} \| e_h \|_{H^1(\Omega_h)}. \label{eq:bound_H1_T4}
\end{align}

For the boundary integral term $\RNum{5}$, an application of the Cauchy--Schwarz inequality on the interface $\Gamma$ yields:
\begin{equation} \label{eq:R5_CS}
\RNum{5} = \left| \int_\Gamma e_h \partial_n e_h \, ds \right| \le \| e_h \|_{L^2(\Gamma)} \| \partial_n e_h \|_{L^2(\Gamma)}.
\end{equation}
The boundary trace of the normal derivative is bounded by combining the standard scaled trace inequality with the prior $H^2$-error estimate \eqref{H2_Error}:
\begin{align}
\| \partial_n e_h \|_{L^2(\Gamma)} 
&\le C h^{-1/2} \| e_h \|_{H^1(\Omega_h)} + C h^{1/2} \| e_h \|_{H^2(\Omega_h)} \notag \\
&\le C h^{-1/2} \| e_h \|_{H^1(\Omega_h)} + C h^{k-1/2} \| f \|_{H^{k-1}(\Omega_h)}. \label{eq:dn_eh_trace}
\end{align}
Substituting the previously derived bound for $\| e_h \|_{L^2(\Gamma)}$ from \eqref{eq:eh_on_Gamma} and the normal derivative estimate \eqref{eq:dn_eh_trace} into \eqref{eq:R5_CS}, we obtain:
\begin{align}
\RNum{5} 
&\le \Big( C h^{k+1/2}\|f\|_{H^{k-1}(\Omega_h)} + C h^{k-1} \| e_h \|_{H^1(\Omega_h)} \Big) \notag \\
&\quad \times \Big( C h^{-1/2} \| e_h \|_{H^1(\Omega_h)} + C h^{k-1/2} \| f \|_{H^{k-1}(\Omega_h)} \Big) \notag \\
&\le C h^k \| f\|_{H^{k-1}(\Omega_h)} \|e_h\|_{H^1(\Omega_h)} + C h^{2k}\|f\|^2_{H^{k-1}(\Omega_h)} + C h^{k-3/2} \| e_h \|^2_{H^1(\Omega_h)}. \label{eq:bound_H1_T5}
\end{align}

Finally, collecting the individual estimates \eqref{eq:bound_H1_T2}, \eqref{eq:bound_H1_T1}, \eqref{eq:bound_H1_T3}, \eqref{eq:bound_H1_T4}, and \eqref{eq:bound_H1_T5}, we arrive at the following comprehensive bound for the gradient error over the physical domain $\Omega$:
\begin{align}
\| \nabla e_h \|^2_{L^2(\Omega)}
&\leq \RNum{1} + \RNum{2} + \RNum{3} + \RNum{4} + \RNum{5} \notag \\
&\lesssim h^k \|f\|_{H^{k-1}(\Omega_h)}
\| e_h \|_{H^1(\Omega_h)}
+ h^{2k} \|f\|^2_{H^{k-1}(\Omega_h)}
+ h^{k-3/2}\| e_h \|^2_{H^1(\Omega_h)}. \label{e_h_semi}
\end{align}

To extend the optimal $H^1$-estimate to the entire computational domain $\Omega_h$, we leverage the discrete extension property of the ghost penalty operator \cite{Burman2010} in conjunction with a generalized Poincar\'e-Friedrichs inequality on the physical domain $\Omega$. We then substitute the gradient bound \eqref{e_h_semi}, the ghost penalty energy estimate \eqref{Energy_norm_xi}, and the interfacial $L^2$-norm estimate \eqref{eq:eh_on_Gamma}. By further bounding the interpolation trace error via the standard continuous trace inequality $\| \rho \|_{L^2(\Gamma)} \le C \| \rho \|_{H^1(\Omega)} \le C h^k \| f \|_{H^{k-1}(\Omega_h)}$, we uniformly estimate the discrete component $\xi_h \in S_h$ using the triangle inequality and Corollary \ref{cor:H1_extension} to obtain: 
\begin{align}
\|\xi_h\|^2_{H^1(\Omega_h)}
&\le C \Big( \|\nabla \xi_h\|^2_{L^2(\Omega)} + \|\xi_h\|^2_{L^2(\Gamma)} + h^2 j_h(\xi_h, \xi_h) \Big) \notag \\
&\le C \Big( \|\nabla e_h\|^2_{L^2(\Omega)} + \|e_h\|^2_{L^2(\Gamma)} + h^2 \TN{\xi_h}^2 + \|\rho\|_{H^1(\Omega_h)}^2 + \|\rho\|_{L^2(\Gamma)}^2 \Big) \notag \\
&\le C \Big( h^{k} \|e_h\|_{H^1(\Omega_h)}  \|f\|_{H^{k-1}(\Omega_h)} + h^{2k}\|f\|^2_{H^{k-1}(\Omega_h)} + h^{k-3/2}\|e_h\|^2_{H^1(\Omega_h)} \Big), 
\notag
%\label{eq:xi_h_H1_bound}
\end{align}
where the sub-optimal error term $h^{2k-2}\|e_h\|^2_{H^1(\Omega_h)}$ is asymptotically dominated and thus safely absorbed by the leading-order component $h^{k-3/2}\|e_h\|^2_{H^1(\Omega_h)}$ for any polynomial degree $k \ge 2$. Applying Young's inequality with an arbitrarily small parameter $\varepsilon > 0$ to decouple the cross term, and subsequently reverting to the total error $e_h = \xi_h + \rho$, we deduce:
\begin{align*}
\|e_h\|_{H^1(\Omega_h)}^2 
&\le 2\|\xi_h\|_{H^1(\Omega_h)}^2 + 2\|\rho\|_{H^1(\Omega_h)}^2 \\
&\le C_1 \varepsilon^{-1} h^{2k}\|f\|^2_{H^{k-1}(\Omega_h)} + C_2 (\varepsilon + h^{k-3/2}) \|e_h\|_{H^1(\Omega_h)}^2.
\end{align*}
Because the polynomial degree $k \ge 2$ guarantees a strictly positive exponent $k - 3/2$, we can select a sufficiently small $\varepsilon$ and assume an adequately refined mesh size $h$ to ensure that $C_2 (\varepsilon + h^{k-3/2}) < 1$. This permits the rigorous absorption of the remaining $H^1$-error terms into the left-hand side, thereby completing the proof of the optimal global estimate $\| e_h \|_{H^1(\Omega_h)} \le C h^k \|f\|_{H^{k-1}(\Omega_h)}$.
\hfill\end{proof}

\section{\emph{A posteriori} error analysis}
\label{sec:poster}

\subsection{Reliability and efficiency of the error indicator}
\label{sec4.1}
A fundamental advantage of the PHT-spline space is its global $C^1$-continuity. This higher-order smoothness circumvents the need for interface jump residuals---which typically complicate the analysis of standard $C^0$ finite element methods---and enables the formulation of an \emph{a posteriori} error indicator based exclusively on the interior $L^2$-residual. 

\begin{proposition}
[Reliability and efficiency of the \emph{a posteriori} indicator]
\label{prop:poster_error_estimate}
Let $u_h\in S_h$ denote the discrete solution, and let $\Tu\in H^{k+1}(\TOmega)$ be the exact extension of the true solution $u$, satisfying $-\Delta \Tu = f + r$ in $\Omega_h$ with the perturbation $r \equiv 0$ in the physical domain $\Omega$. We define the local and global \emph{a posteriori} error indicators by
\begin{equation}
\eta_T := \| \Delta u_h + f \|_{L^2(T)} \quad \forall\, T\in \CT_h, \qquad
\eta := \left( \sum_{T \in \CT_h} \eta_T^2 \right)^{1/2},
\end{equation}
respectively. Then, the error $e_h := \Tu - u_h$ satisfies the reliability bound
\begin{equation}\label{poster_reliable}
\| \Delta e_h\|_{L^2(\Omega_h)} \leq \eta + \| r\|_{L^2(\Omega_h)},
\end{equation}
and the efficiency bound
\begin{equation}\label{poster_efficient}
\eta \leq \| \Delta e_h\|_{L^2(\Omega_h)} + \| r \|_{L^2(\Omega_h)}.
\end{equation}
\end{proposition}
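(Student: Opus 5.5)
The plan is a direct algebraic identity followed by the triangle inequality; no stability lemma or interpolation estimate is needed. The key observation is that the local residual $\Delta u_h + f$ can be rewritten in terms of the error $e_h$ and the perturbation $r$.

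First, since $-\Delta \Tu = f + r$ holds a.e.\ in $\Omega_h$, we have $f = -\Delta\Tu - r$. Substituting this gives, on every cell $T \in \CT_h$,
\[
\Delta u_h + f = \Delta u_h - \Delta \Tu - r = -\Delta e_h - r .
\]
This identity is meaningful elementwise. Indeed, $u_h \in S_h$ is piecewise polynomial and globally $C^1$, so $\Delta u_h \in L^2(T)$ on each cell and no interface jump terms appear. Likewise $\Tu \in H^{k+1}(\TOmega)$ with $k\ge 3$, so $\Delta e_h \in L^2(\Omega_h)$ is well defined globally.

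Next, we sum over cells. Since the cells of $\CT_h$ tile $\Omega_h$ up to a null set,
\[
\eta = \Big(\sum_{T\in\CT_h}\|\Delta u_h+f\|_{L^2(T)}^2\Big)^{1/2} = \|\Delta e_h + r\|_{L^2(\Omega_h)} .
\]

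Both bounds then follow from Minkowski's inequality in $L^2(\Omega_h)$:
\[
\eta \le \|\Delta e_h\|_{L^2(\Omega_h)} + \|r\|_{L^2(\Omega_h)}
\]
gives the efficiency bound \eqref{poster_efficient}, and
\[
\|\Delta e_h\|_{L^2(\Omega_h)} = \|(\Delta e_h + r) - r\|_{L^2(\Omega_h)} \le \eta + \|r\|_{L^2(\Omega_h)}
\]
gives the reliability bound \eqref{poster_reliable}.

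The only point needing care is the justification that $\Delta u_h$ and $\Delta\Tu$ are $L^2$ functions whose cellwise restrictions agree with the global distributional Laplacians. This rests on the $C^1$ continuity of PHT-splines for $k\ge3$: no singular part supported on faces arises, so the broken and global Laplacians coincide.

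We may optionally remark that $r \equiv 0$ in $\Omega$, so $\|r\|_{L^2(\Omega_h)} = \|r\|_{L^2(\Omega_h\setminus\Omega)}$. By \eqref{equ:r_f} this term is a higher-order consistency quantity of size $O(h^{k-1})$, comparable to the optimal rate of $\|\Delta e_h\|_{L^2(\Omega_h)}$.
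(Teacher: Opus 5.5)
Your proposal is correct and follows the same route as the paper: substitute $f=-\Delta\Tu-r$ to get $\eta=\|\Delta e_h+r\|_{L^2(\Omega_h)}$, then apply the triangle inequality in each direction to obtain the efficiency and reliability bounds. Your added justification is a harmless extra detail the paper leaves implicit; it uses the $C^1$ continuity of $u_h$ to show that the cellwise and global Laplacians coincide.
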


\begin{proof}
Recalling the strong form $f = -\Delta \Tu - r$ in $\Omega_h$ and substituting this relation into the definition of the global indicator $\eta$, we obtain
\begin{align*}
\eta 
&= \| \Delta u_h + f \|_{L^2(\Omega_h)} \\
&= \| \Delta u_h - \Delta \Tu - r \|_{L^2(\Omega_h)} \\
&= \| -\Delta e_h - r \|_{L^2(\Omega_h)} 
= \| \Delta e_h + r \|_{L^2(\Omega_h)} .
\end{align*}
Applying the triangle inequality directly yields the reliability upper bound \eqref{poster_reliable}:
\begin{equation*}
\| \Delta e_h \|_{L^2(\Omega_h)} 
= \| (\Delta e_h + r) - r \|_{L^2(\Omega_h)} \le \| \Delta e_h + r \|_{L^2(\Omega_h)} + \| r \|_{L^2(\Omega_h)} = \eta + \| r \|_{L^2(\Omega_h)}.
\end{equation*}
A completely analogous application of the triangle inequality to $\eta = \| \Delta e_h + r \|_{L^2(\Omega_h)}$ establishes the efficiency lower bound \eqref{poster_efficient}:
\begin{equation*}
\eta \le \| \Delta e_h \|_{L^2(\Omega_h)} + \| r \|_{L^2(\Omega_h)},
\end{equation*}
which completes the proof.
\hfill\end{proof}

\begin{remark}
Proposition~\ref{prop:poster_error_estimate} shows that the indicator
$\eta$ provides two-sided bounds for the Laplacian error up to the
perturbation term $\|r\|_{L^2(\Omega_h)}$. Indeed,
\[
\left|
\eta-\|\Delta e_h\|_{L^2(\Omega_h)}
\right|
\le
\|r\|_{L^2(\Omega_h)}.
\]
Using
$\|\Delta e_h\|_{L^2(\Omega_h)}
\le
\sqrt d\,|e_h|_{H^2(\Omega_h)}$
together with \eqref{H2_Error} and \eqref{equ:r_f}, we obtain
\[
\eta
\le
\|\Delta e_h\|_{L^2(\Omega_h)}
+
\|r\|_{L^2(\Omega_h)}
\le
C h^{k-1}.
\]
Thus, the indicator $\eta$ decays with the optimal-order upper bound
$\mathcal O(h^{k-1})$.  
\end{remark}

\subsection{Adaptive algorithm}
\label{sec4.2}
Based on the \emph{a posteriori} error indicator defined above, we implement a standard adaptive iteration loop of the form \textbf{SOLVE} $\to$ \textbf{ESTIMATE} $\to$ \textbf{MARK} $\to$ \textbf{REFINE} \cite{Dorfler1996,Ainsworth1997,Chen2021} to efficiently capture local features of the solution on the hierarchical T-mesh. The detailed procedure is outlined in Algorithm \ref{algo}.

Starting from an initial uniform background mesh $\HCT_h^{(0)}$, the algorithm iterates until the error tolerance $\mathrm{TOL}$ is met. In the \textbf{MARK} step, we employ the standard Dörfler marking strategy with a parameter $\theta \in (0,1)$ to select a minimal subset of elements $\mathcal{M}^{(\ell)}$ that contribute significantly to the global error $\eta^{(\ell)}$. In the \textbf{REFINE} step, the marked elements undergo a one-to-four (or one-to-eight in 3D) spatial subdivision. Furthermore, recursive refinement is triggered on adjacent cells to strictly enforce the 1-regularity constraint. Finally, the active computational mesh $\CT_h^{(\ell+1)}$ is dynamically identified 
by selecting the elements intersecting the discrete domain 
$\{\phi^{(\ell+1)}_h < 0\}$.

\begin{algorithm}[t]
\caption{Adaptive refinement algorithm}
\label{algo}
\begin{algorithmic}[1]
\Require Error tolerance $\mathrm{TOL}>0$, marking parameter $\theta\in(0,1)$, and maximum iteration limit $\ell_{\rm max} > 0$.

\State \textbf{Initialize:} Construct an initial uniform background mesh $\hat{\mathcal T}_h^{(0)}$ with $N_0\times N_0$ cells. Set $\ell \gets 0$ and $\eta^{(0)} \gets +\infty$.
\While{$\eta^{(\ell)} > \mathrm{TOL}$ and $\ell \le \ell_{\rm max}$}
    \State \textbf{Step 1:} From the background mesh $\hat{\mathcal T}_h^{(\ell)}$ construct the
    active mesh $\mathcal T_h^{(\ell)}\subset \hat{\mathcal T}_h^{(\ell)}$ based on the current level-set function $\phi^{(\ell)}_h$.
    \State \textbf{Step 2:} Assemble the discrete system on $\mathcal T_h^{(\ell)}$ and solve the resulting linear system to obtain $u_h^{(\ell)}$.
    \State \textbf{Step 3:} Compute the local indicators $\eta^{(\ell)}_T = \|\Delta u_h^{(\ell)} + f \|_{L^2(T)} $
    for $T \in \CT_h^{(\ell)}$,
    and the global indicator
    \(
        \eta^{(\ell)} := \Big(\sum_{T\in \CT_h^{(\ell)}} ( \eta_T^{(\ell)})^2 \Big)^{1/2}.
    \)
    \State \textbf{Step 4:} Find a minimal set of marked cells
    $\mathcal M^{(\ell)}\subset \CT_h^{(\ell)}$ such that
    \[
        \sum_{T\in \mathcal M^{(\ell)}} ( \eta_T^{(\ell)})^2 \;\ge\; \theta \sum_{T\in \CT_h^{(\ell)}} ( \eta_T^{(\ell)})^2
        \;=\; \theta \big(\eta^{(\ell)}\big)^2.
    \]
    \State \textbf{Step 5:} Apply one-to-four (one-to-eight in 3D) subdivision to each marked cell $T \in \mathcal M^{(\ell)}$ and recursively refine the necessary neighboring cells to strictly enforce the 1-regular constraint. This process yields the valid hierarchical refined mesh $\HCT_h^{(\ell+1)}$. 
    \State \textbf{Update:} Set $\ell \gets \ell+1$.
\EndWhile
\end{algorithmic}
\end{algorithm}

\section{Numerical experiments}
\label{sec:test}
All numerical experiments are implemented using the MATLAB-based package \texttt{IGAPack} \cite{Anitescu2018}, employing the hierarchical T-mesh described in the previous sections. Unless otherwise specified, we utilize bicubic PHT-splines (polynomial degree $k=3$) for the discrete space $ V_h$. In the following tests, we take tolerance $\mathrm{TOL} = 10^{-8}$, marking parameter $\theta = 0.25$ and maximum iteration $\ell_{\rm max} = 20$ in Algorithm~\ref{algo}.

\subsection{Example 1}
\label{example1}
We consider a smooth level-set function,
\begin{align}\label{example1_phi}
\phi(x,y) = (x - 0.5)^2 
+ (y - 0.5)^2 - r^2,
\qquad (x,y) \in (0,1)^2,
\end{align}
with $r=0.3$.
The exact solution is chosen as
$u_{\text{ex}}(x, y) 
= \phi(x,y) \exp(x+y)$,
and the corresponding right-hand side is computed analytically.
Here $u_{\rm ex}$ is explicitly defined on the entire background domain $\hat\Omega$, so the errors evaluated on $\Omega_h$ involve this fixed extension and have no extension ambiguity. 

We consider uniformly refined meshes, starting from an initial coarse mesh consisting of $16 \times 16$ cells. These meshes do not contain hanging nodes and satisfy $h_M / h_m = 1$. The accuracy is evaluated by computing the errors $\| u_h - u_{\text{ex}} \|_{H^1(\Omega_h)}$ and $\| u_h - u_{\text{ex}} \|_{H^2(\Omega_h)}$ on the background mesh. As reported in Table \ref{Tab_Uniform_Error}, the proposed method achieves the optimal convergence rates predicted by the theory: $\mathcal{O}(h^3)$ in the $H^1$-norm and $\mathcal{O}(h^2)$ in the $H^2$-norm.

\begin{table}[htbp]
\centering
\caption{Convergence results with uniformly refined meshes for Example~\ref{example1}.}
\label{Tab_Uniform_Error}
% \vspace{-15pt}
\begin{tabular}{lcccc}
\toprule
$h$ & $\| u_h - u_{\text{ex}}\|_{H^1(\Omega_h)}$ & order 
& $\| u_h - u_{\text{ex}} \|_{H^2(\Omega_h)}$ & order \\
\midrule
1/16  &1.58E-07	& --	&1.63E-05	& --  \\
1/32 &1.94E-08	&3.03	&4.00E-06	&2.02 \\
1/64 &2.39E-09	&3.02	&9.90E-07	&2.02 \\
1/128&2.98E-10	&3.00	&2.47E-07	&2.00 \\
\bottomrule
\end{tabular}
% \vspace{-0.5cm}
\end{table}

\subsection{Example 2}
\label{example2}
Consider the shifted coordinates
$X := x-\tfrac12$, $Y := y-\tfrac12$ with the level-set function:
\begin{equation}
\phi(x,y)
:= X^2+Y^2-r_0^2
\;-\;A\sum_{j=0}^{5}\exp\!\left(-\frac{(X-X_j)^2+(Y-Y_j)^2}{\sigma_0^{\,2}}\right),
\end{equation}
where the bump centers $(X_j,Y_j)$ are distributed on a circle of radius $r_0$,
\begin{equation}\label{eq:bump_centers_2d}
(X_j,Y_j) := \bigl(r_0\cos\theta_j,\; r_0\sin\theta_j\bigr),
\qquad \theta_j:=\frac{2\pi j}{6},\qquad j=0,\dots,5 .
\end{equation}
Let
$u_{\rm base}(x,y):=\exp\!\left(-\frac{(x-0.3)^2+(y-0.7)^2}{\sigma^{2}}\right)$,
we prescribe the exact solution
$u_{\rm ex}(x,y):=\phi(x,y)\,u_{\rm base}.$
In this test, we take $r_0=0.2$, $\sigma_0=0.1$, $A=0.5$ and $\sigma=0.06$.

Starting from an initial $32 \times 32$ background mesh, we compare the performance of uniform refinement against the proposed adaptive strategy. Fig.~\ref{Fig_Poster_Error} illustrates the evolution of the indicator $\eta$ and the exact errors with respect to the active DOFs. While both strategies successfully achieve the optimal asymptotic rates ($\mathcal{O}(h^3)$ in $H^1$-norm and $\mathcal{O}(h^2)$ in $H^2$-norm), the adaptive method drastically reduces the computational cost.

The superior efficiency is further quantified in Table~\ref{Tab_uniform-vs-adaptive}. The adaptive approach attains an accuracy level comparable to that of uniform refinement using nearly 60\% fewer DOFs (6128 vs. 14764). Conversely, for a fixed DOF budget, adaptivity reduces the numerical error by almost an order of magnitude. This targeted efficiency is visually confirmed in Fig.~\ref{Figure_Example2_Tmesh}; the generated adaptive T-mesh (3328 leaf cells) automatically concentrates the refinement in the local region $[0.2, 0.5] \times [0.5, 0.8]$ to strictly resolve the sharp peak at $(0.3, 0.7)$, demonstrating the robustness of the \emph{a posteriori} indicator.

\begin{table}[t]
\centering
\caption{Comparison between different refinements for Example \ref{example2}.}
\label{Tab_uniform-vs-adaptive}
\begin{tabular}{lrcc}
\hline
Refinement strategy & DOFs & $\|e_h\|_{H^1(\Omega_h)}$ & $\|e_h\|_{H^2(\Omega_h)}$ \\
\hline
Uniform   & 14764 & 4.82E-5 & 3.50E-2 \\
Adaptive  & 6128  & 3.03E-5 & 2.05E-2 \\
Adaptive  & 14932 & 4.82E-6 & 5.71E-3 \\
\hline
\end{tabular}
% \vspace{-0.5cm}
\end{table}

\begin{figure}[htbp]
	\centering
	\includegraphics[width=0.6\textwidth]{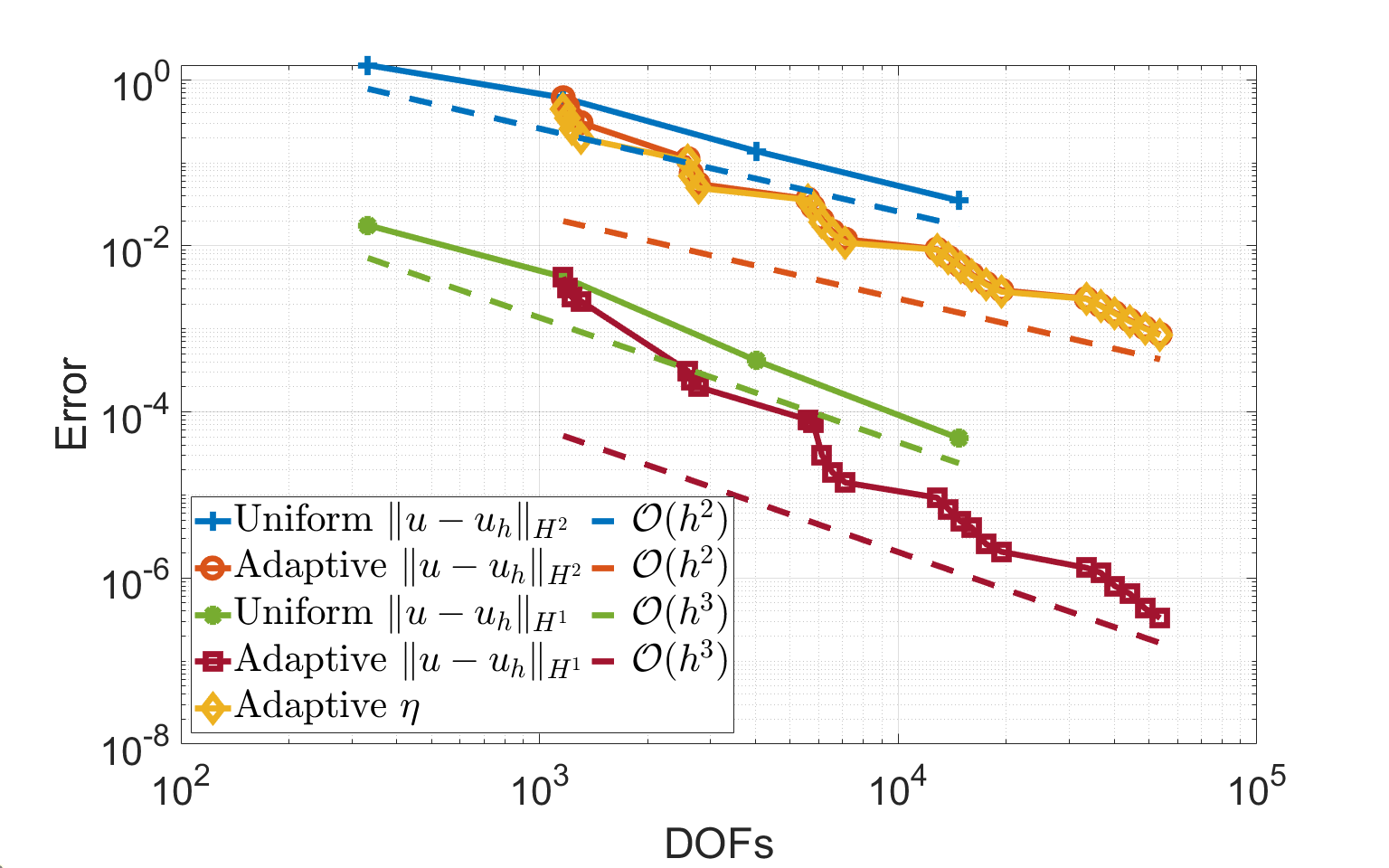}
    % \vspace{-10pt}
	\caption{Error versus DOFs on uniform and adaptively refined meshes for Example~\ref{example2}.}
    \label{Fig_Poster_Error}
    % \vspace{-1cm}
\end{figure}

\begin{figure}[htbp]
    \centering
    \includegraphics[width=0.35\textwidth]{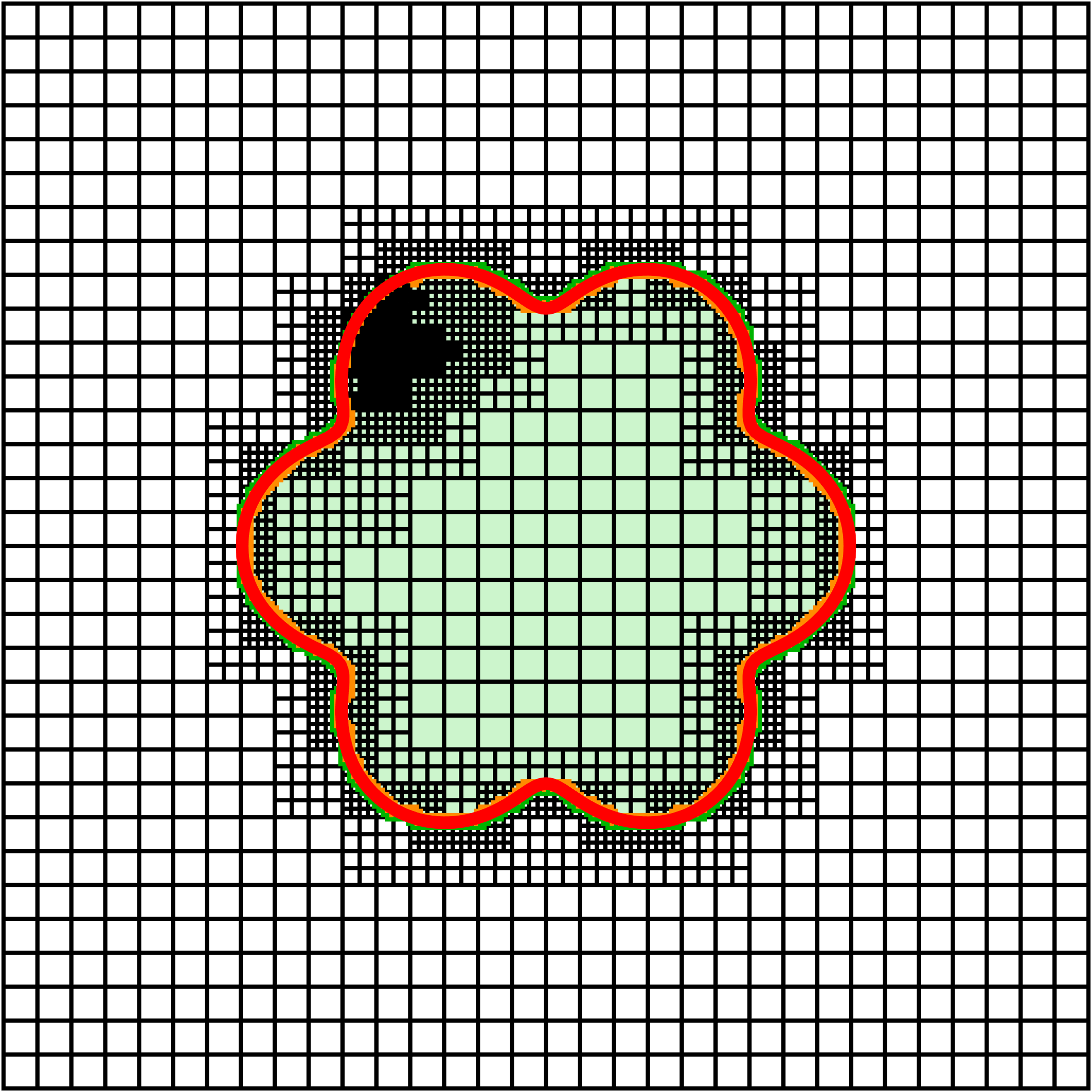}
    \qquad
    \includegraphics[width=0.35\textwidth]{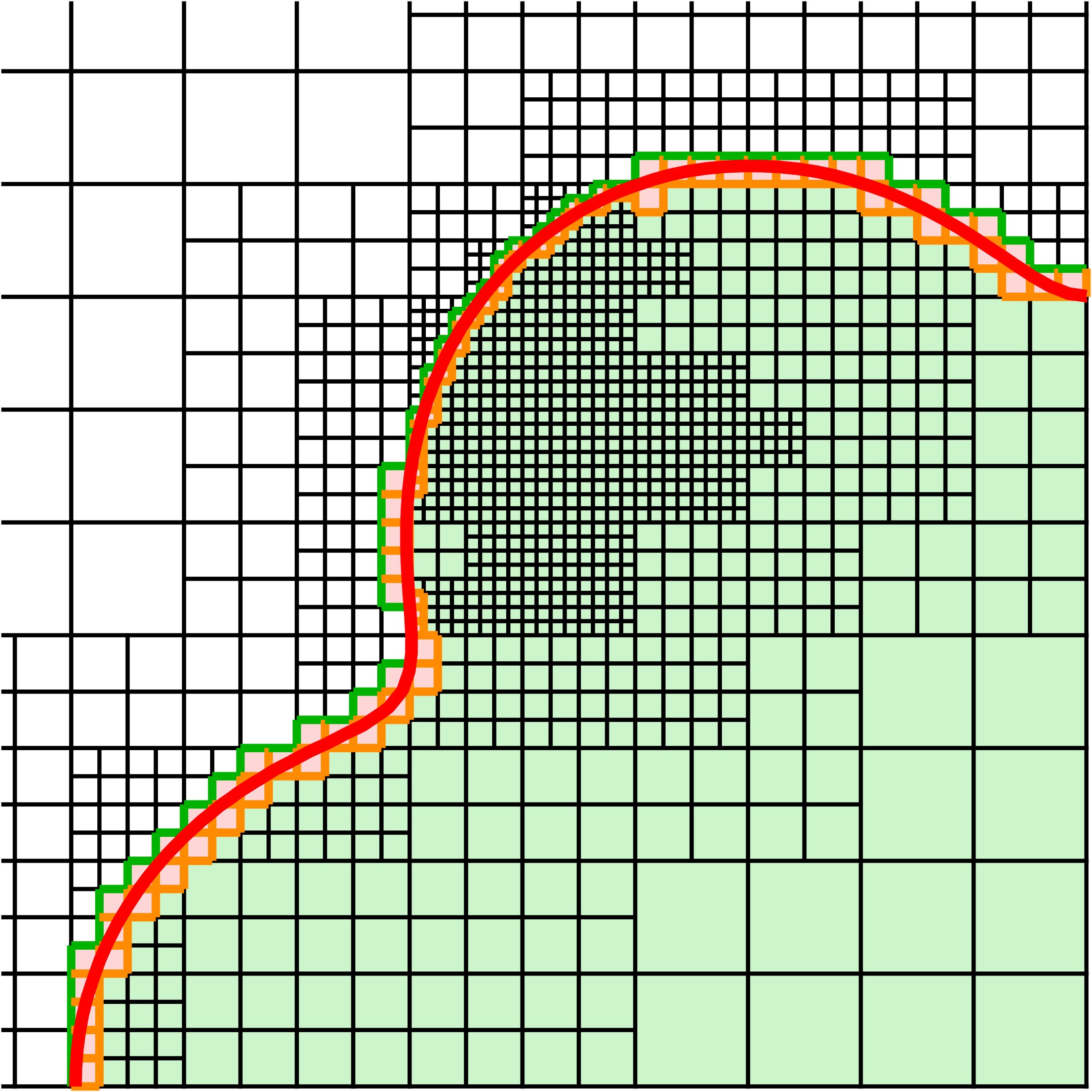}
    \caption{The computational mesh and its zoomed-in view of the region $[0.2,0.5]\times [0.5,0.8]$ with 3328 leaf cells. }
    \label{Figure_Example2_Tmesh}
    % \vspace{-1cm}
\end{figure}

\subsection{Example 3}
\label{example3}
In the last test, we move on to the three-dimensional case. Following \cite{Burman2015, phifem_heat}, we denote the shifted coordinates
$X:=x-\frac12$, 
$Y:=y-\frac12$, 
$Z:=z-\frac12$,
and the parameters
$r_0=0.25, \sigma_0=0.15, A=1, c=\frac{1}{\sigma_0^2}.$
Define the 12 bump centers on the sphere of radius $r_0$ as follows.
Let $s:= r_0 / \sqrt{5}$. For $j=0,1,2,3,4$,
\[
(X_j,Y_j,Z_j)=\bigl(2s\cos\theta_j,\;2s\sin\theta_j,\;s\bigr),\qquad
\theta_j=\frac{2j\pi}{5}.
\]
For $j=5,6,7,8,9$,
\[
(X_j,Y_j,Z_j)=\bigl(2s\cos\theta_j,\;2s\sin\theta_j,\;-s\bigr),\qquad
\theta_j=\frac{(2(j-5)-1)\pi}{5}.
\]
Finally, the two poles are
\[
(X_{10},Y_{10},Z_{10})=(0,0,r_0),\qquad (X_{11},Y_{11},Z_{11})=(0,0,-r_0).
\]
The level-set function $\phi$ is defined by
\begin{equation*}
\phi = X^2+Y^2+Z^2-r_0^2
-\sum_{j=0}^{11} A\exp\!\Big(-c\big((X - X_j)^2 + (Y-Y_j)^2 + (Z-Z_j)^2\big)\Big).
\end{equation*}
Next, define
$u_{\rm base}(x,y,z)=\sin(\pi x)\sin(\pi y)\sin(\pi z)$,
and the exact solution is taken as
$u_{\rm ex}(x,y,z)=\phi(x,y,z)u_{\rm base}(x,y,z)$.

Figure~\ref{Figure_Example3_Mesh} illustrates the zero-contour surface $\Gamma$ of the bumpy sphere, alongside a representative cutaway of the adaptively refined 3D hierarchical T-mesh. 
We evaluate the accuracy of the proposed method by tracking the errors on both uniformly and adaptively refined meshes. As depicted in Fig.~\ref{Figure_Example3}, the computed $H^1$ and $H^2$ errors retain their optimal asymptotic convergence rates. 
These 3D results confirm two critical properties of the proposed UnCut FEM framework: first, the PHT-spline discretization perfectly preserves its optimal high-order accuracy under aggressive local refinement; second, the presence of hanging nodes does not compromise the convergence behavior of the scheme.

\begin{figure}[htbp]
    \centering
    \includegraphics[width=0.32\textwidth]{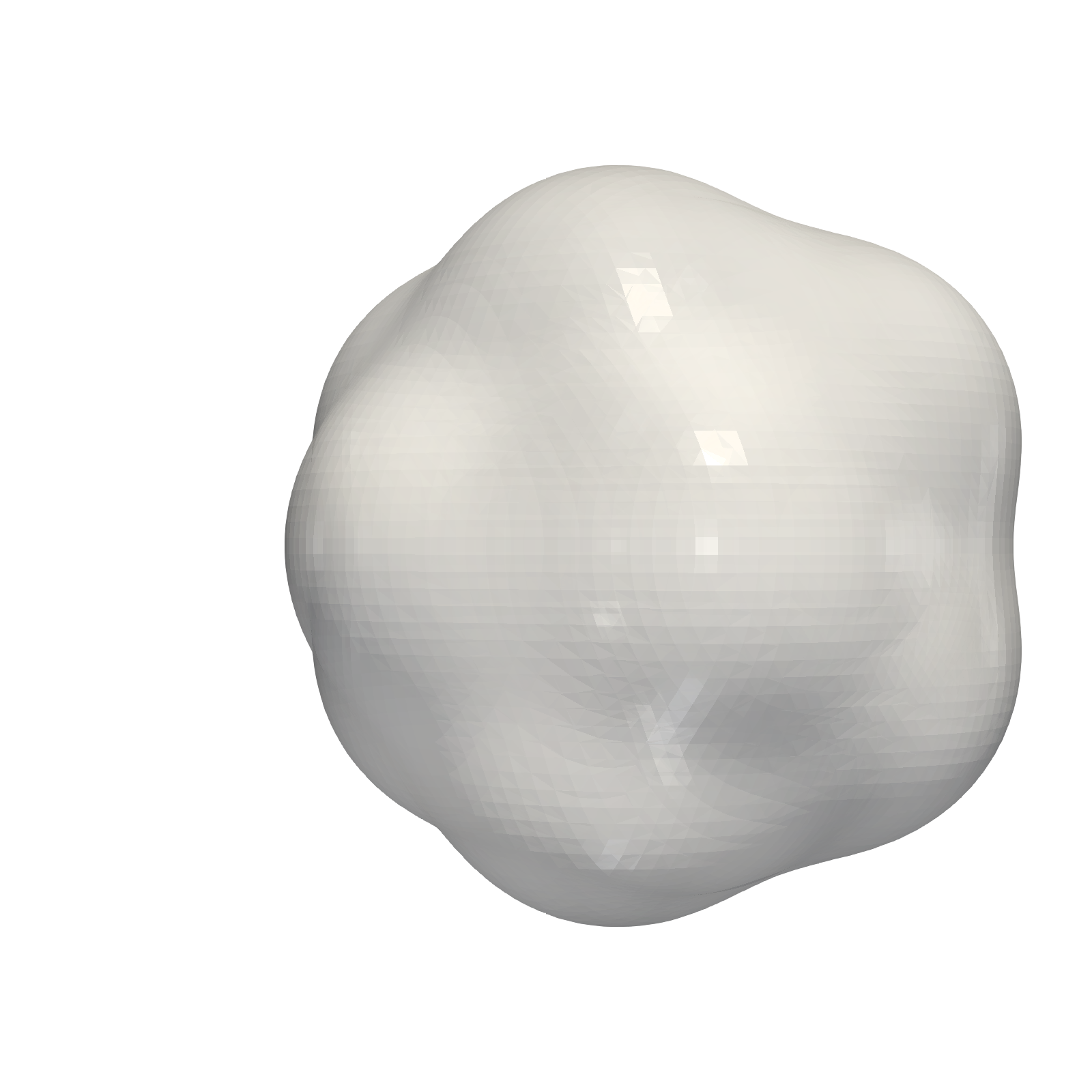}
    \hfill
    \includegraphics[width=0.32\textwidth]{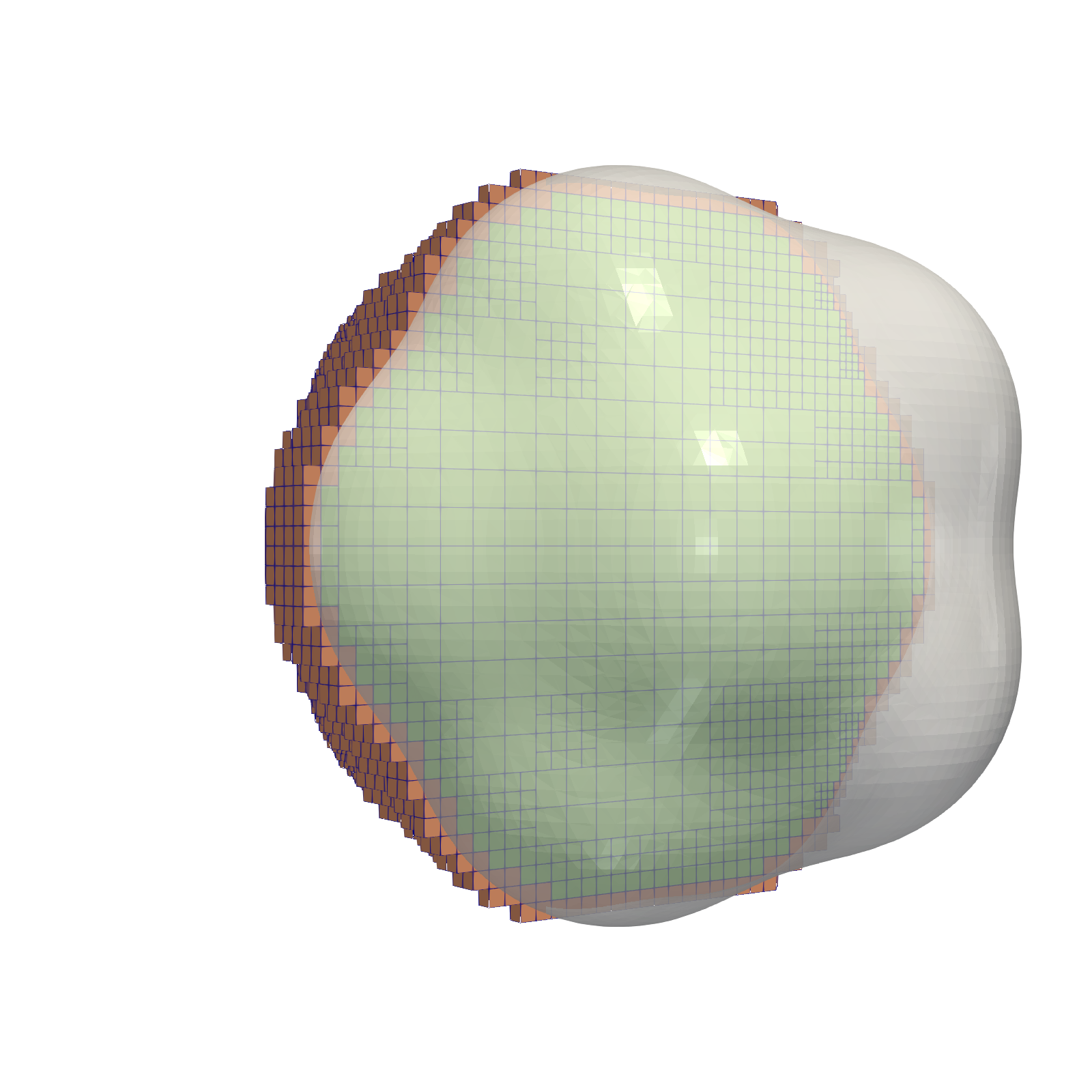}
    \hfill
    \includegraphics[width=0.32\textwidth]{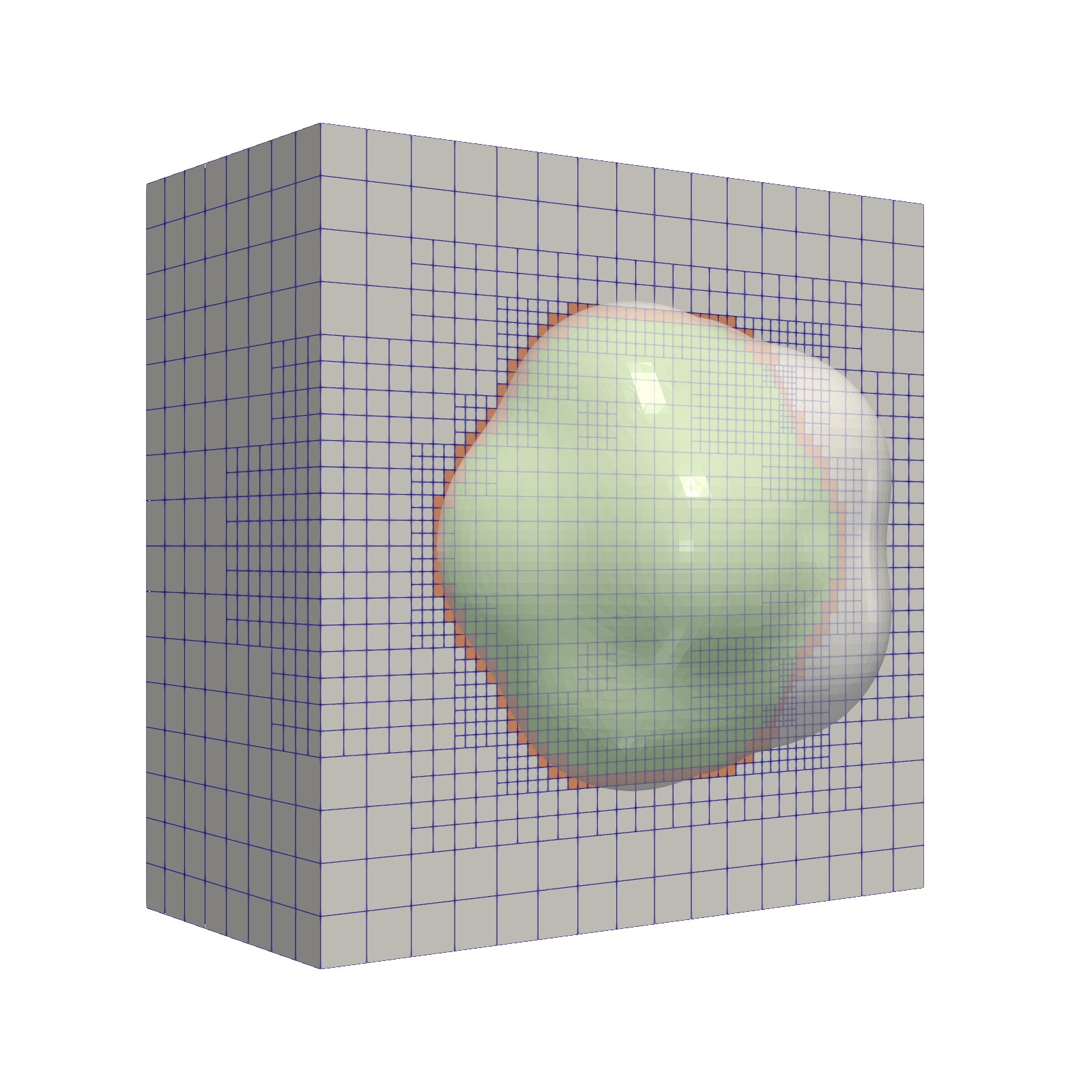}
    \caption{Adaptive mesh of Example~\ref{example3}. 
    Left: the surface $\Gamma = \{\phi = 0 \}$;
    Middle: half cut and interior cells;
    Right: half adaptive mesh with $\Gamma$. 
    Furthermore, gray, orange and green cells indicate the outer cells, the cut cells and the inner cells, respectively.}
    \label{Figure_Example3_Mesh}
\end{figure}

\begin{figure}[htbp]
    \centering
    \includegraphics[width=0.48\textwidth]{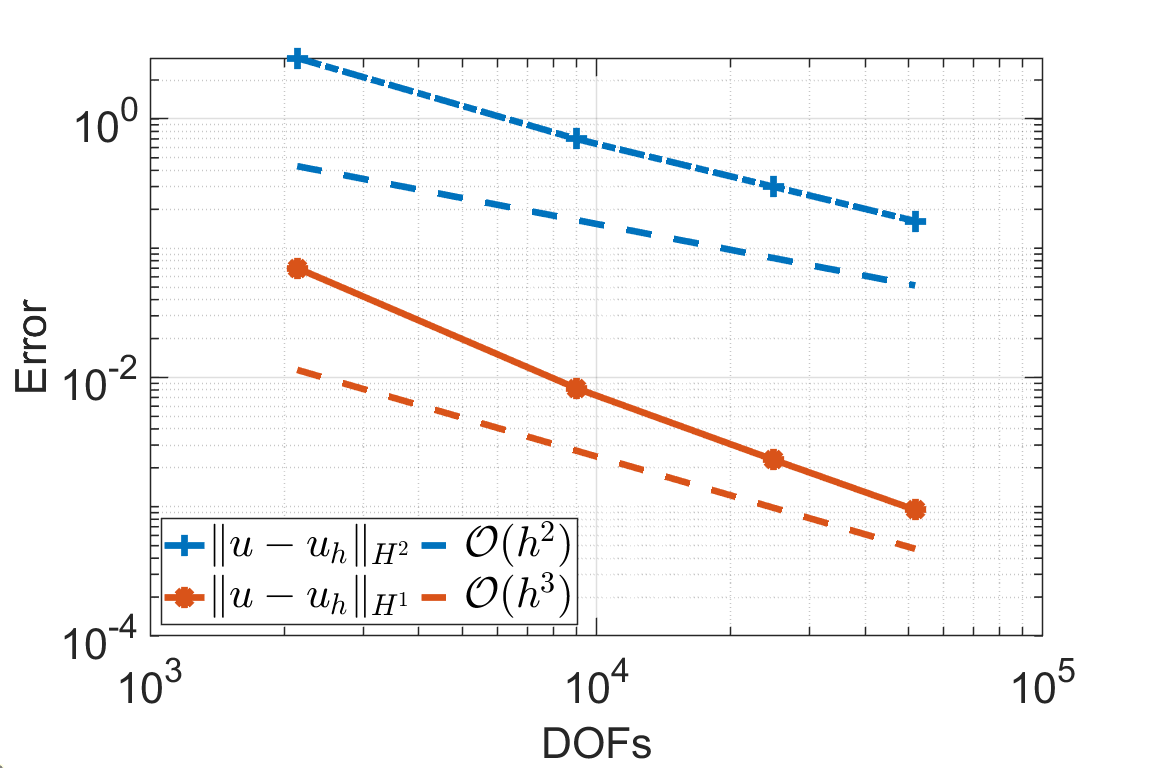}
    \hfill
    \includegraphics[width=0.48\textwidth]{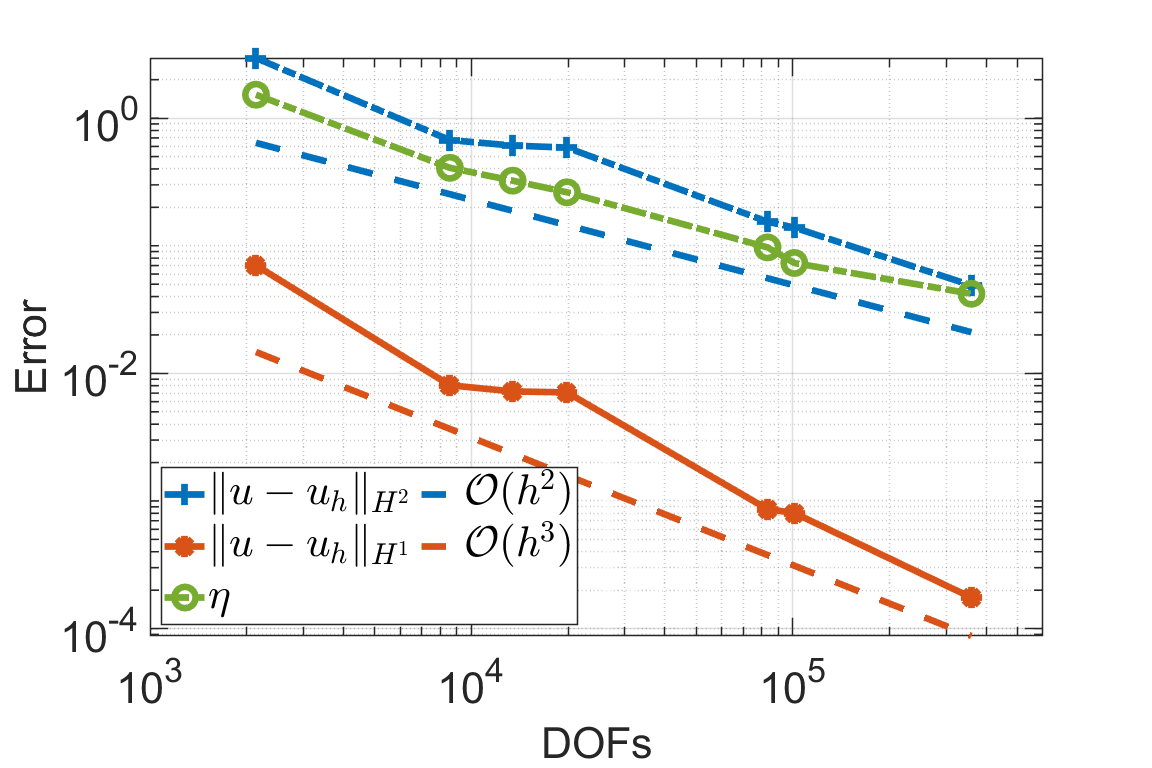}
    \caption{Convergence of errors with respect to DOFs on uniformly refined meshes (left) and adaptively refined meshes (right).}
    \label{Figure_Example3}
\end{figure}

\section{Conclusion}
\label{sec:conclusion}
We proposed and analyzed a least-squares UnCut FEM for the Poisson problem on level-set domains.
The method combines a $\phi$-FEM unfitted construction with $C^1$ continuous PHT-splines, which enables a fully primal formulation based on the $L^2$ residual $\|\Delta u_h+f\|_{L^2(\Omega_h)}$.
All terms are assembled on the active background domain $\Omega_h$ using standard element-wise quadrature, thereby avoiding geometric integration over curved boundaries or cut subregions of boundary elements. 
Consequently, the method is simple to implement and computationally efficient.
We proved existence and uniqueness of the discrete solution and established optimal-order \emph{a priori} error estimates in the $H^1$ norm.
In addition, a residual-based \emph{a posteriori} indicator was employed to guide local refinement on hierarchical T-meshes, and the numerical experiments demonstrate that it provides an effective criterion for adaptive mesh refinement.

In the future, by leveraging the exact geometric representation capabilities of PHT-splines, we plan to extend this unfitted least-squares framework 
to solve PDEs posed on surfaces and on trimmed CAD geometries.

\bibliographystyle{abbrv}
\bibliography{reference}

\end{document}

%% file: macros.tex
\providecommand*{\Dist}[2]{\operatorname{dist}({#1};{#2})}   % distance
\providecommand*{\Dist}[2]{\Dist{#1}{#2}}
\newcommand*{\TN}[1]{\big|\hspace{-0.3mm}\big\|{#1}\big\|\hspace{-0.3mm}\big|}     % Double bar norm
\newcommand*{\jump}[2][]{\left \llbracket{#2}\right\rrbracket_{#1}}

\newcommand{\be}{\begin{eqnarray}}
\newcommand{\ee}{\end{eqnarray}}

\newcommand{\ben}{\begin{eqnarray*}}
\newcommand{\een}{\end{eqnarray*}}